\newcommand{\floorfrac}[2]{\left\lfloor \frac{#1}{#2} \right\rfloor}
\newcommand{\KL}{Kazhdan-Lusztig\ }
\newtheorem{thm}{Theorem}[section]
\newtheorem{lem}[thm]{Lemma}
\newtheorem{prop}[thm]{Proposition}
\newtheorem{cor}[thm]{Corollary}
\newtheorem{conj}[thm]{Conjecture}
\newtheorem{defn}{Definition}[section]
\numberwithin{equation}{section}
\DeclareMathOperator{\B}{\mathcal{B}}
\begin{document}
\begin{center}
{\large \bf  Log-concavity of inverse Kazhdan-Lusztig polynomials of paving matroids}
\end{center}
\begin{center}
 Matthew H.Y. Xie$^1$ and Philip B. Zhang$^{2}$\\[6pt]
 $^{1}$ College of Science \\
Tianjin University of Technology, Tianjin 300384, P. R. China\\[6pt]

$^{2}$ College of Mathematical Science \& Institute of Mathematics and Interdisciplinary Sciences,
Tianjin Normal University, Tianjin  300387, P. R. China\\[6pt]
Email: 	 $^{1}${\tt xie@email.tjut.edu.cn},
 $^{2}${\tt zhang@tjnu.edu.cn}
\end{center}
\noindent
\textbf{Abstract.}
Gao and Xie (2021) conjectured that the inverse Kazhdan-Lusztig polynomial of any matroid is log-concave.
Although the inverse Kazhdan-Lusztig polynomial may not always have only real roots, we conjecture that the
Hadamard product of  an inverse Kazhdan-Lusztig polynomial of degree $n$ and $(1+t)^n$ has only real roots.
Using interlacing polynomials and multiplier sequences, we confirm this conjecture for paving matroids. This result allows us to confirm the log-concavity conjecture for these matroids by applying Newton's inequalities.

\noindent\textbf{AMS Classification 2010:} 05A20, 05B35, 33F10, 26C10

\noindent\textbf{Keywords:} Log-concavity; inverse Kazhdan-Lusztig polynomial; paving matroid
\section{Introduction}
The goal of this paper is to prove the log-concavity of the inverse Kazhdan-Lusztig polynomials of paving matroids.
Elias, Proudfoot, and Wakefield~\cite{elias2016kazhdan} introduced the notion of the \KL polynomial of a matroid.
Similarly, Gao and Xie \cite{gao2021inverse} introduced the inverse Kazhdan-Lusztig polynomial of a matroid.
Recently, Braden, Huh, Matherne, Proudfoot, and Wang~\cite{braden2020singular} proved that the \KL polynomials and the inverse \KL polynomials of matroids have only non-negative coefficients.
However, the log-concavity of \KL polynomials and inverse \KL polynomials of matroids is still open and only a few results have been obtained, see \cite{xie2023log,wu2023log}.
A sequence $ a_0, a_1, a_2, \ldots, a_n$  is said to be \textit{log-concave and has no internal zeros} if it satisfies $ a_i^2 \ge a_{i-1}a_{i+1}$ for any $ 1 \le i \le n-1 $, and the indices of its nonzero elements form consecutive integers. A polynomial is said to be log-concave if its coefficients form a log-concave sequence. Gao and Xie~\cite{gao2021inverse}  conjectured that the coefficients of the inverse Kazhdan-Lusztig polynomials of matroids are log-concave and have no internal zeros. We use $Q_M(t)$ to denote the inverse Kazhdan-Lusztig polynomial of the matroid $M$.
\begin{conj}[\cite{gao2021inverse}]\label{LC}
For any matroid $M$, the coefficients of $Q_M(t)$ form a log-concave sequence and have no internal zeros.
\end{conj}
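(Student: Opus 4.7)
The plan is to attack Conjecture~\ref{LC} by proving the stronger real-rootedness claim flagged in the abstract: for every matroid $M$ with $\deg Q_M(t)=n$, the Hadamard product $Q_M(t)\star (1+t)^n$ has only real roots. By Newton's inequalities this implies log-concavity (with no internal zeros, using the non-negativity theorem of Braden--Huh--Matherne--Proudfoot--Wang). The strategy is to induct on the size of the ground set, using the defining recursion of the inverse \KL polynomial, which expresses $Q_M(t)$ as a signed sum over flats $F\subseteq M$ involving $Q$-polynomials of the localizations $M_F$ and characteristic polynomials (or $P$-polynomials) of the contractions $M^F$.

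First, I would strengthen the inductive statement so that it tracks not only real-rootedness of $Q_M(t)\star (1+t)^n$ but also a compatible family of interlacing relations between these Hadamard products, indexed by the flat lattice. The point of working with the Hadamard product is that multiplication by $(1+t)^n$ acts as a multiplier sequence, smoothing coefficients and turning mere log-concavity into genuine root-separation. Second, I would try to translate the Kazhdan-Lusztig-Stanley recursion into an identity at the level of Hadamard products. Here one has to be careful because the degrees of the $Q_{M_F}$ vary with $F$, so the exponents in the companion factor $(1+t)^{n}$ must be matched flat by flat; the natural tool is a preservation lemma saying that if two polynomials interlace then so do their Hadamard products with appropriately chosen $(1+t)^k$. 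Third, I would use the non-negativity of the $Q$-coefficients to ensure that the signs in the flat-sum behave like those in a Hermite-Biehler-style combination, so that interlacing propagates through the sum rather than being destroyed by cancellation.

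The principal obstacle, and the reason the full conjecture remains open, is controlling the interaction between the global recursion and arbitrary flat lattices. For paving matroids the flat structure below the top rank is rigid (essentially truncated Boolean), which is precisely what makes the interlacing-plus-multiplier-sequence argument of the present paper go through; for a general matroid the lattice of flats can have much more complicated rank-generating behavior, and the signed sum over flats in the $Q$-recursion need not remain in the interlacing cone built up by the induction. A second, related obstacle is that unlike the ordinary \KL polynomial, $Q_M(t)$ has no known Hodge-theoretic interpretation (such as a Hard Lefschetz package on an intersection cohomology-type space) from which real-rootedness could be read off geometrically.

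A realistic route, therefore, is to first establish the Hadamard real-rootedness conjecture for broader structured classes (sparse paving, elementary split, uniform, and graphic matroids) by adapting the multiplier-sequence toolkit of the present paper, and then look for a uniform mechanism---either a semi-small-type decomposition lifted from the $P$-polynomial setting or a new positivity identity for the $Q$-recursion---that forces the interlacing pattern to persist for all matroids. Absent such a uniform mechanism, the inductive step appears to require case analysis on the flat lattice that cannot be carried out in full generality with currently available techniques.
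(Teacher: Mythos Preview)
The statement you are addressing is Conjecture~\ref{LC}, which the paper does \emph{not} prove; it remains open. The paper establishes only the special case of paving matroids (Theorem~\ref{paving-log}), deduced from the real-rootedness result Theorem~\ref{paving-rz}. Your proposal is candid about this: what you have written is a research outline for the full conjecture together with an honest assessment that the inductive scheme via the Kazhdan--Lusztig--Stanley recursion cannot currently be pushed through for arbitrary flat lattices. As a proof of Conjecture~\ref{LC} it is therefore incomplete by your own account, and there is no proof in the paper to compare it against.

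Where a comparison is possible is the paving case. There your high-level description (``interlacing-plus-multiplier-sequence'') is accurate, but the mechanism the paper actually uses is not induction through the flat lattice. Instead, the paper invokes the Ferroni--Nasr--Vecchi formula (Theorem~\ref{paving_coeff}), which writes $Q_M(t)$ for a paving matroid as $Q_{U_{m,d}}(t)-\sum_h \lambda_h\,(Q_{U_{h,d}}(t)-Q_{U_{h,d-1}}(t))$ with $\lambda_h\ge 0$. The proof then establishes, via explicit Hermite--Kakeya--Obreschkoff and discriminant computations aided by multiplier sequences (Lemmas~\ref{ms1} and~\ref{ns1}), the chain of interlacings $\B(Q_{U_{m,d}})\ll \B(Q_{U_{h,d}}-Q_{U_{h,d-1}})$ for all $1\le h\le m$ (Lemma~\ref{h_m_int}), and concludes by Lemma~\ref{lem:convex}. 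So the paper's argument is a direct structural decomposition specific to paving matroids, not an instance of the flat-lattice induction you sketch; your proposed inductive framework, even restricted to paving matroids, would still need to supply the interlacing step that the paper obtains by brute-force discriminant verification on the uniform pieces.
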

Elias, Proudfoot, and Wakefield~\cite{elias2016kazhdan} conjectured that the Kazhdan-Lusztig polynomials have only real roots. However, Gao and Xie pointed out that inverse Kazhdan-Lusztig polynomials do not necessarily have only real roots. Nevertheless, we conjecture that a variation of inverse Kazhdan-Lusztig polynomials has only real roots.
Recall that a matroid is called \emph{paving} if the size of every circuit is at least the rank of $M$.
The main object of this paper is as follows.
\begin{thm}\label{paving-log}
For any paving matroid $M$, the coefficients of $Q_M(t)$ form a log-concave sequence and have no internal zeros.
\end{thm}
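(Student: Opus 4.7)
The plan is to follow the program announced in the abstract: reduce Theorem~\ref{paving-log} to the real-rootedness of an auxiliary Hadamard product, and then extract log-concavity via Newton's inequalities. Write $Q_M(t)=\sum_{k=0}^{n}a_k t^k$ with $n=\deg Q_M(t)$, and set
\[
H_M(t)\;:=\;(Q_M\odot(1+t)^n)(t)\;=\;\sum_{k=0}^{n}\binom{n}{k}\,a_k\,t^k.
\]
Once $H_M(t)$ is shown to have only real roots, Newton's inequalities applied to $H_M$ rearrange precisely into $a_k^2\ge a_{k-1}a_{k+1}$; combined with the non-negativity of the coefficients of $Q_M(t)$ proved by Braden--Huh--Matherne--Proudfoot--Wang~\cite{braden2020singular}, the real-rootedness of $H_M$ also forces the support of $(a_k)$ to be a consecutive block (a real-rooted polynomial with non-negative coefficients necessarily has interval-shaped support). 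The whole theorem thereby reduces to proving that $H_M(t)$ is real-rooted.

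To carry this out, I would first invoke the explicit closed-form expression for $Q_M(t)$ when $M$ is paving, in terms of its rank, ground-set size, and the sizes and multiplicities of its stressed hyperplanes. Such a formula exhibits $Q_M(t)$ as a non-negative integer combination of simpler ``building-block'' polynomials indexed by stressed hyperplane data; applying the Hadamard product with $(1+t)^n$ coefficientwise then presents $H_M(t)$ as a non-negative combination of modified building blocks. The problem thereby splits into (a) proving that each modified building block is real-rooted, and (b) proving that all modified building blocks that can occur share a common interlacing polynomial, whence a classical theorem of Obreschkoff guarantees that their non-negative combination $H_M(t)$ is again real-rooted.

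For step (a) I would appeal to P\'olya--Schur multiplier sequences: the operator $t^k\mapsto\binom{n}{k}t^k$ and its refinements preserve real-rootedness on suitable classes, and hence realize each modified building block as the image of a known real-rooted reference polynomial (e.g.\ a power of $1+t$ or a Jacobi-type polynomial naturally carrying binomial coefficients). For step (b) I would attempt to construct a common interlacer directly from these reference polynomials by iteratively applying interlacing-preserving operations. I expect step (b) to be the main obstacle: the building blocks are parametrized by data that varies across all paving matroids of a fixed rank and size, so the candidate interlacer must interlace every admissible block uniformly, and verifying this uniformity is the delicate part of the argument. Once the common interlacing has been established, Newton's inequalities and non-negativity of coefficients together deliver the log-concavity and the absence of internal zeros asserted in Theorem~\ref{paving-log}.
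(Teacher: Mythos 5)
Your overall framing (Hadamard product with $(1+t)^n$, real-rootedness via multiplier sequences and interlacing, then Newton's inequalities plus non-negativity) is the same high-level program the paper follows, and your use of P\'olya--Schur multiplier sequences to handle the binomial multipliers is exactly the tool employed. However, there are two concrete gaps that make the proposal as stated not work.

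First, the Ferroni--Nasr--Vecchi decomposition for a paving matroid is a \emph{difference}, not a non-negative combination: $Q_M(t)=Q_{U_{m,d}}(t)-\sum_{h}\lambda_h\bigl(Q_{U_{h,d}}(t)-Q_{U_{h,d-1}}(t)\bigr)$ with $\lambda_h\ge 0$. Your step (b) --- finding a common interlacer so that Obreschkoff's theorem on non-negative combinations applies --- is the wrong tool for a subtraction: a non-negative cone of real-rooted polynomials sharing a common interlacer is closed under addition, not under subtracting one member from another. What is actually needed is the stronger statement that $\B(Q_{U_{m,d}})\ll\B(Q_{U_{h,d}}-Q_{U_{h,d-1}})$ for every $1\le h\le m$; then by Wagner's convexity lemma $\B(Q_{U_{m,d}})\ll\sum_h\lambda_h\B(Q_{U_{h,d}}-Q_{U_{h,d-1}})$, and the Hermite--Kakeya--Obreschkoff theorem (alternating roots iff \emph{all} real linear combinations are real-rooted) finally yields real-rootedness of the difference. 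Establishing the interlacing $\B(Q_{U_{m,d}})\ll\B(Q_{U_{h,d}}-Q_{U_{h,d-1}})$ --- which requires pinning down the \emph{direction} of the interlacing via a Wro\'nskian sign computation, not just the alternation of roots --- is the main technical content of the paper and is entirely absent from your plan.

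Second, there is a degree mismatch you do not address. You define $H_M(t)$ using $n=\deg Q_M(t)$, but the building blocks $Q_{U_{h,d}}-Q_{U_{h,d-1}}$ (and $Q_{U_{m,d}}$ itself) all have degree $\lfloor(d-1)/2\rfloor$, which can strictly exceed $\deg Q_M(t)$. Since the operator $\B$ depends on the degree, $H_M(t)$ need \emph{not} equal $\B(Q_{U_{m,d}})-\sum_h\lambda_h\B(Q_{U_{h,d}}-Q_{U_{h,d-1}})$, so you cannot read off the real-rootedness of $H_M$ directly from the real-rootedness of that combination. The paper circumvents this with an extra step: it first shows the degree-$\lfloor(d-1)/2\rfloor$ Hadamard product is real-rooted with non-negative coefficients, deduces that the coefficient sequence of $Q_M$ is an $\lfloor(d-1)/2\rfloor$-sequence, and then applies it to $(1+t)^{\deg Q_M}$ to obtain real-rootedness of $\B(Q_M)$ at the correct degree. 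Without some such workaround, the reduction you describe does not close.
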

We prove Theorem \ref{paving-log} via the approach of real-rooted polynomials.
Given a polynomial of degree $n$
\[
p(t)=a_0+a_1 t+\cdots+ a_n t^n,
\]
we denote
\[
\B(p(t))= \binom{n}{0} a_0+\binom{n}{1} a_1 t +\cdots+ \binom{n}{n}a_n t^n,
\]
which is the Hadamard product of $(1+t)^n$ and $p(t)$.
Note that the operator $\B$ is not linear for two polynomials with distinct degrees.
\begin{thm}\label{paving-rz}
For any paving matroid $M$, the polynomial $\B(Q_M(t))$ has only real roots.
\end{thm}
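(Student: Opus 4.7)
The plan is to combine an explicit formula for $Q_M(t)$ on paving matroids with the method of interlacing polynomials. First, I would invoke a combinatorial formula for the inverse Kazhdan-Lusztig polynomial of a paving matroid available in the recent literature: $Q_M(t)$ can be written as a non-negative linear combination of simpler polynomials, indexed by the sizes of the stressed hyperplanes of $M$, whose building blocks are closely related to inverse Kazhdan-Lusztig polynomials of uniform matroids. These uniform-case polynomials admit explicit formulas in terms of binomial coefficients, so after applying $\B$ every summand becomes fully explicit.

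Writing the decomposition as $Q_M(t) = \sum_h c_h \, q_h(t)$ with $c_h \geq 0$ and each $q_h(t)$ of degree at most $n = \deg Q_M(t)$, the operator $\B$ (taken with respect to this common degree $n$) produces $\B(Q_M(t)) = \sum_h c_h R_h(t)$, where $R_h(t)$ is obtained from $q_h(t)$ by multiplying the coefficient of $t^i$ by $\binom{n}{i}$. The heart of the proof is to show that this non-negative sum is real-rooted.

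The natural tool is the classical theorem, due to Chudnovsky--Seymour and Fisk, that a non-negative linear combination of real-rooted polynomials sharing a common interlacer is again real-rooted. Accordingly, I would first verify that each $R_h(t)$ is individually real-rooted; here the uniform-matroid shape of $q_h$ allows one to recognise $R_h$ as the image of a classical hypergeometric polynomial under a multiplier sequence in the Laguerre--P\'olya class, from which real-rootedness follows. I would then exhibit a common interlacer for the family $\{R_h(t)\}$, again using multiplier sequences to transport interlacing from a classical setting (Jacobi-type polynomials are a natural candidate) to the $R_h$.

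The main obstacle is this last point: producing a common interlacer. Since $\B$ forces every $q_h$ into degree $n$, the summands $R_h$ coming from hyperplanes of different sizes have genuinely different shapes, and it is not a priori clear that a single polynomial interlaces them all. Overcoming this will likely require either a direct computation of an explicit interlacer that works uniformly in $h$, or an inductive argument that propagates interlacing across $h$ via a recurrence among the $R_h$. Once this is established, the interlacing-polynomials theorem yields Theorem \ref{paving-rz}, and combined with Newton's inequalities this recovers Theorem \ref{paving-log}.
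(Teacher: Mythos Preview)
Your proposal rests on a structural assumption that does not match the available formula. The Ferroni--Nasr--Vecchi expression (Theorem~\ref{paving_coeff}) is
\[
Q_M(t)=Q_{U_{m,d}}(t)-\sum_{h=1}^{m}\lambda_h\bigl(Q_{U_{h,d}}(t)-Q_{U_{h,d-1}}(t)\bigr),
\]
with $\lambda_h\ge 0$: a single leading term \emph{minus} a non-negative combination of corrections, not a non-negative combination of building blocks. The common-interlacer theorem you invoke only controls non-negative sums, so it cannot be applied to this difference. This is not a technicality that can be patched by rewriting the sum; subtracting a real-rooted polynomial from another can easily destroy real-rootedness, and one genuinely needs the two pieces to interlace in the correct direction.

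The paper's argument addresses exactly this point. It shows that $\B(Q_{U_{m,d}}(t))$ itself interlaces each $\B(Q_{U_{h,d}}(t)-Q_{U_{h,d-1}}(t))$ (Lemma~\ref{h_m_int}, built from a chain of pairwise interlacings via Lemma~\ref{WeakTransitivity}); by Lemma~\ref{lem:convex} the leading term then interlaces the full non-negative sum of corrections, and Hermite--Kakeya--Obreschkoff gives real-rootedness of the \emph{difference}. So the ``common interlacer'' you are searching for is not an auxiliary polynomial but the main term itself, and the direction of interlacing matters because of the minus sign.

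There is also a degree issue. You set $n=\deg Q_M(t)$ and apply $\B$ to each summand at that degree, asserting each $q_h$ has degree at most $n$. In fact every building block has degree exactly $\lfloor (d-1)/2\rfloor$, which can strictly exceed $\deg Q_M(t)$ (the top coefficients may cancel). The paper works at degree $\lfloor (d-1)/2\rfloor$ throughout, and then uses the $n$-sequence characterization (Theorem~\ref{ch-n}) to pass from the real-rootedness of the degree-$\lfloor (d-1)/2\rfloor$ expression down to $\B(Q_M(t))$ computed at its actual degree; this descent is handled explicitly in the final paragraph of the proof and is not automatic.
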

By Newton's inequalities, Theorem \ref{paving-rz} implies Theorem \ref{paving-log}.
In general, we make the following conjecture, which would imply Conjecture \ref{LC}.
\begin{conj}\label{TRZ}
For any simple matroid $M$, the polynomial $\B(Q_M(t))$ has only real roots.
\end{conj}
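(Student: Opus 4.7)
The plan is to extend the strategy of Theorem~\ref{paving-rz} to general simple matroids by induction on the rank of $M$. The inverse Kazhdan-Lusztig polynomial $Q_M(t)$ satisfies a defining recursion that expresses it as an alternating sum over proper flats $F$ of $M$, with summands built from $Q_{M_F}(t)$ and the Kazhdan-Lusztig polynomials $P_{M^F}(t)$ of the localizations. The induction hypothesis then supplies real-rootedness of $\mathcal{B}(Q_{M_F}(t))$ for every proper flat, and the aim is to propagate this through the recursion to $\mathcal{B}(Q_M(t))$.

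The first technical obstacle is that $\mathcal{B}$ is not linear across polynomials of differing degree, so one cannot naively distribute it over the recursion. I would address this by first homogenizing each summand to the common degree $n=\lfloor(\mathrm{rk}\,M-1)/2\rfloor$, multiplying by appropriate powers of $t$ and of $(1+t)$, and only then applying $\mathcal{B}$ in its Hadamard-product form. In this form $\mathcal{B}$ becomes Hadamard product against the fixed real-rooted polynomial $(1+t)^n$, whose coefficient sequence is the canonical binomial multiplier sequence, and on each individual (homogenized) summand one can hope to invoke classical Schur-Malo-type preservation results.

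The second and deeper obstacle is that even if each $\mathcal{B}$-transformed summand is real-rooted, their signed sum need not be. The paving case circumvents this by producing a common interlacer among the summands: only flats arising from non-spanning circuits contribute non-trivially, so the summands are explicit and well-behaved, and the multiplier-sequence action of $(1+t)^n$ preserves the interlacing. For a general simple matroid, the flat lattice can be arbitrarily intricate and no explicit closed form for $Q_M(t)$ is known, so constructing a common interlacer is where I expect the main difficulty to lie.

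A reasonable staged attack would be to treat sparse paving matroids, whose coefficients are essentially explicit, then rank-$3$ and rank-$4$ simple matroids, then matroids of bounded corank, and uniform matroids of arbitrary rank. In parallel, one could seek a finer decomposition of $Q_M(t)$, perhaps inspired by the intersection-cohomological interpretation of inverse Kazhdan-Lusztig polynomials, that is better adapted to the $\mathcal{B}$ transformation. Any solution should genuinely exploit the simplicity hypothesis in Conjecture~\ref{TRZ}, since this is where the recursion begins to acquire enough structure to make such a real-rootedness phenomenon plausible.
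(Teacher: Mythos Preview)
The statement you are attempting to prove is stated in the paper as a \emph{conjecture}, not a theorem; the paper offers no proof of it. The only case established in the paper is the paving case (Theorem~\ref{paving-rz}), whose proof relies on the explicit formula of Ferroni--Nasr--Vecchi (Theorem~\ref{paving_coeff}) expressing $Q_M(t)$ as $Q_{U_{m,d}}(t)$ minus a nonnegative combination of the polynomials $Q_{U_{h,d}}(t)-Q_{U_{h,d-1}}(t)$, together with hand-verified interlacing relations among these finitely many building blocks.

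Your proposal is honest about its own incompleteness, but it is important to name the gap precisely: the entire argument in the paving case hinges on the existence of a \emph{single common interlacer} $\B(Q_{U_{m,d}}(t))$ for all the subtracted terms, so that Lemma~\ref{lem:convex} and the Hermite--Kakeya--Obreschkoff theorem apply. For a general simple matroid there is no analogue of Theorem~\ref{paving_coeff}: the defining recursion for $Q_M(t)$ involves products $Q_{M_F}(t)\cdot(\text{terms from }M^F)$ over all proper flats, with signs and degrees varying with $\mathrm{rk}(F)$, and the induction hypothesis gives you real-rootedness of each $\B(Q_{M_F}(t))$ but says nothing about mutual interlacing. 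Your homogenization step does not resolve this, because Hadamard product with $(1+t)^n$ preserves real-rootedness but does not manufacture interlacing between polynomials that were not already interlacing. The ``staged attack'' you describe (sparse paving, low rank, bounded corank) is a reasonable research programme, but none of those stages is carried out, and the cohomological suggestion is speculative. In short, what you have written is a plausible outline of where one might look, not a proof; this matches the paper's own position, which leaves Conjecture~\ref{TRZ} open.
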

Recall that a sequence satisfying the Turán inequality is also called a log-concave sequence.
The higher order Turán inequalities are defined by
$4(a_{i}^2-a_{i-1}a_{i+1})(a_{i+1}^2-a_{i}a_{i+2})-(a_{i}a_{i+1}-a_{i-1}a_{i+2})^2\geq 0$.
In recent years, the higher-order Turán inequalities for combinatorial sequences have attracted significant attention, as seen in \cite{chen2019higher,ono2022turan}.
A theorem of Ma\v{r}\'{\i}k~\cite{marik1964} states that if a real polynomial
$\binom{n}{0} a_0+\binom{n}{1} a_1 t+\cdots+ \binom{n}{n}a_n t^n$ of degree $n \geq 3$ has only real roots, then $a_0, a_1,\ldots, a_n$ satisfies the higher-order Turán inequalities.
By Theorem \ref{paving-rz},  we get the following result.
\begin{cor}
For any paving matroid $M$, the coefficients of $Q_M (t)$ satisfy the higher order Turán inequalities.
\end{cor}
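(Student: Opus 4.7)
The plan is to deduce this corollary essentially immediately by combining Theorem~\ref{paving-rz} with Mař\'{\i}k's theorem as recalled in the text just before the corollary. Concretely, let $n=\deg Q_M(t)$ and write
\[
Q_M(t)=a_0+a_1t+\cdots+a_nt^n,
\qquad
\B(Q_M(t))=\sum_{i=0}^{n}\binom{n}{i}a_i t^i.
\]
By Theorem~\ref{paving-rz}, the polynomial $\B(Q_M(t))$ has only real roots. If $n\ge 3$, Mař\'{\i}k's theorem applied directly to $\B(Q_M(t))$ yields that the sequence $a_0,a_1,\ldots,a_n$ satisfies the higher-order Tur\'an inequalities, which is exactly the claim.

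To finish, I would address the degenerate small-degree cases. The higher-order Tur\'an inequality
\[
4(a_i^2-a_{i-1}a_{i+1})(a_{i+1}^2-a_ia_{i+2})-(a_ia_{i+1}-a_{i-1}a_{i+2})^2\ge 0
\]
involves entries $a_{i-1},a_i,a_{i+1},a_{i+2}$, so the assertion requires $1\le i\le n-2$ and is vacuously true whenever $n\le 2$. Thus for paving matroids with $\deg Q_M(t)\le 2$ there is nothing to prove, and Mař\'{\i}k's theorem covers all remaining cases.

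I do not anticipate any genuine obstacle here: the heavy lifting has already been done in Theorem~\ref{paving-rz}, and Mař\'{\i}k's theorem is precisely the bridge from real-rootedness of the Hadamard product with $(1+t)^n$ to the higher-order Tur\'an inequalities for the original coefficient sequence. The only care needed is to observe that the operator $\B$ produces exactly the binomially-weighted polynomial to which Mař\'{\i}k's hypothesis applies, and to dispose of the low-degree cases as above.
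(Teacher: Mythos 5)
Your proposal is correct and matches the paper's (implicit) argument: combine Theorem~\ref{paving-rz} with Ma\v{r}\'{\i}k's theorem, noting that $\B(Q_M(t))$ is exactly the binomially-weighted polynomial to which that theorem applies; your extra care in disposing of the degree-$\le 2$ cases is a reasonable addition that the paper glosses over.
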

The structure of this paper is as follows.
In Section 2, we provide the necessary background on real-rooted polynomials.
Section 3 focuses on the log-concavity of the inverse Kazhdan-Lusztig polynomials of paving matroids, presenting the main proofs and results.
Finally, in Section 4, we propose several conjectures regarding the degree of $Q_M(t)$.
\section{Interlacing and multiplier sequence}
In this section, we recall some useful results of interlacing polynomials and multiplier sequences.
Given two real-rooted polynomials $f(t)$ and $g(t)$ with positive leading coefficients, let $\{u_i\}$ be the set of roots of $f(t)$ and let $\{v_j\}$ be the set of roots of $g(t)$. We say that $g(t)$ interlaces $f(t)$, denoted $g(t) \ll f(t)$, if either of the following two conditions is satisfied:
\begin{itemize}
\item[(1)] $\deg f(t)=\deg g(t)=n$ and
\begin{align*}
v_n\le u_n\le v_{n-1}\le\cdots\le v_2\le u_2\le v_1\le u_1;
\end{align*}
\item[(2)]
$\deg f(t)=\deg g(t)+1=n$ and
\begin{align*}
u_{n}\le v_{n-1}\le\cdots\le v_{2}\le u_{2}\le v_{1}\le u_{1}.
\end{align*}
\end{itemize}
For convenience, we let $0 \ll f$ and $f \ll 0$ for any real-rooted polynomial $f$.
Following Br{\"a}nd{\'e}n~\cite{Braenden2015Unimodality},
a sequence of real polynomials $(f_{1},\dots,f_{m})$
with positive leading coefficients is said to be \emph{interlacing} if $f_{i} \ll f_{j}$ for all $1\le i<j\le m$.
The following lemma plays an important role in our paper.
\begin{lem}[\cite{wagner1992total}, Proposition 3.3]\label{WeakTransitivity}
If $f_0,f_1, f_2, \ldots, f_{m}$ are real-rooted polynomials with $f_0 \ll f_{m}$ and
$f_{i-1} \ll f_i$ for all $1 \leq i \leq m$, then the polynomial sequence $(f_0, f_1, \ldots, f_m)$ is interlacing.
\end{lem}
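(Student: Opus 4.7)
The plan is to give a direct proof by tracking inequalities among the ordered roots of the $f_i$'s, without needing induction on $m$. Throughout, let $f_i^{(k)}$ denote the $k$-th largest root of $f_i$.

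First, I would exploit the degree constraints implicit in the hypotheses. The chain relation $f_{i-1} \ll f_i$ forces $\deg f_i \in \{\deg f_{i-1}, \deg f_{i-1}+1\}$, so the degrees are non-decreasing along the chain, while $f_0 \ll f_m$ requires $\deg f_m - \deg f_0 \in \{0,1\}$. Combining these, all $\deg f_i$ lie in $\{n, n+1\}$ for some $n$, and there is an index $k^*$ with $\deg f_i = n$ for $i < k^*$ and $\deg f_i = n+1$ for $i \geq k^*$ (one of these ranges may be empty).

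Next, for each pair $i < j$ I would establish $f_i \ll f_j$ by verifying the two families of inequalities in the definition: ``lower'' inequalities of the form $f_i^{(k)} \leq f_j^{(k)}$ and ``upper'' inequalities of the form $f_j^{(k+1)} \leq f_i^{(k)}$. The lower inequalities follow by iterating the chain hypothesis: each step $f_\ell \ll f_{\ell+1}$, in either case (1) or case (2), gives $f_\ell^{(k)} \leq f_{\ell+1}^{(k)}$ for the relevant indices. The upper inequalities are the subtle ones and rely essentially on the extra hypothesis $f_0 \ll f_m$. For these I would chain
\[
f_j^{(k+1)} \leq f_m^{(k+1)} \leq f_0^{(k)} \leq f_i^{(k)},
\]
using the chain from $f_j$ up to $f_m$ for the first step, the hypothesis $f_0 \ll f_m$ for the crucial index-shifting second step, and the chain from $f_0$ up to $f_i$ for the third step.

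The main obstacle is the bookkeeping of the case analysis at the degree jump. The definition of $g \ll f$ splits into case (1) (equal degrees) and case (2) (degrees differing by one), with the root indices playing different roles in each. When $i < k^* \leq j$, the pair $(f_i, f_j)$ itself straddles the jump, so the required interlacing involves $n+1$ roots of $f_j$ against only $n$ roots of $f_i$, and one must verify that each inequality admits a derivation of the form above, choosing the correct ``anchor'' among $f_1$, $f_m$, and the chain steps on either side of $k^*$. Once the indexing is set up correctly, every such inequality reduces to a short chain of the three types identified above, completing the proof.
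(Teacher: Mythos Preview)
The paper does not supply its own proof of this lemma; it is quoted verbatim from Wagner's paper and used as a black box. So there is no in-paper argument to compare against.

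Your proposal is correct. The degree analysis is exactly right: the chain forces $\deg f_i$ to be nondecreasing, the end condition $f_0\ll f_m$ caps the total increase at one, so all degrees lie in $\{n,n+1\}$ with at most one jump index $k^*$. Your decomposition of $f_i\ll f_j$ into ``lower'' inequalities $f_i^{(k)}\le f_j^{(k)}$ and ``upper'' inequalities $f_j^{(k+1)}\le f_i^{(k)}$ is the standard one, and your derivations work in every case. The lower ones propagate directly along the chain since in both case~(1) and case~(2) of the definition one has $g^{(k)}\le f^{(k)}$ for $k\le\deg g$. For the upper ones, the chain
\[
f_j^{(k+1)}\ \le\ f_m^{(k+1)}\ \le\ f_0^{(k)}\ \le\ f_i^{(k)}
\]
is valid: the first and third steps are again lower-type chains, and the middle step is precisely where the extra hypothesis $f_0\ll f_m$ enters. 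Checking ranges, the middle step requires $k\le\deg f_0=n$, the first requires $k+1\le\deg f_j$, and the third requires $k\le n$; together these cover every $k$ needed in each of the three degree configurations $(n,n)$, $(n+1,n+1)$, $(n,n+1)$. The bookkeeping around $k^*$ that you flag as the main obstacle is genuine but routine once set up this way. One trivial slip: in your last paragraph you write ``$f_1$'' where you mean ``$f_0$'' as the left anchor.
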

We also need the following lemma.
\begin{lem}[\cite{wagner1992total}, Proposition 3.5]\label{lem:convex}
	Let $f$, $g$, and $h$ be real-rooted polynomials  with nonnegative coefficients.
	\begin{itemize}
		\item  	If $f \ll g$ and $f \ll h$, then $f \ll g+h$.
		\item 	If $f \ll g$ and $h \ll g$, then $f+h \ll g$.
	\end{itemize}
\end{lem}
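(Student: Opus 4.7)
The plan is to prove both parts by the same recipe: compute the sign of each summand at the roots of the ``common'' polynomial ($f$ in part (1), $g$ in part (2)), observe that the two signs agree, and then apply the intermediate value theorem together with a degree count to pin down the roots of the sum.

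For part (1), take first the generic case $\deg f=\deg g=\deg h=n$ with all interlacings strict. Let $u_{1}>\cdots>u_{n}$ be the roots of $f$ and $v_{1}>\cdots>v_{n}$ those of $g$. The interlacing $f\ll g$ gives $u_{i+1}<v_{i}<u_{i}$, so in $g(u_{i})=\mathrm{lead}(g)\prod_{j}(u_{i}-v_{j})$ exactly the $i-1$ factors with $j\le i-1$ are negative, yielding $\mathrm{sign}\,g(u_{i})=(-1)^{i-1}$. The identical count at the roots of $h$ gives $\mathrm{sign}\,h(u_{i})=(-1)^{i-1}$, and therefore $\mathrm{sign}\,(g+h)(u_{i})=(-1)^{i-1}$. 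The intermediate value theorem then places a root of $g+h$ in each of the $n-1$ open intervals $(u_{i+1},u_{i})$. Since $g+h$ has positive leading coefficient, $(g+h)(u_{1})>0$, and the sign at $-\infty$ is $(-1)^{n}$ while $\mathrm{sign}\,(g+h)(u_{n})=(-1)^{n-1}$, there is no root above $u_{1}$ and exactly one root below $u_{n}$. The resulting configuration is exactly $f\ll g+h$. Part (2) is the symmetric dual: evaluating at the roots $r_{1}>\cdots>r_{n}$ of $g$, both hypotheses force $\mathrm{sign}\,f(r_{i})=\mathrm{sign}\,h(r_{i})=(-1)^{i-1}$, so $\mathrm{sign}\,(f+h)(r_{i})=(-1)^{i-1}$; the IVT plus a sign check at $\pm\infty$ tailored to $\deg(f+h)$ then yields $f+h\ll g$.

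The main obstacle lies in the degeneracies excluded from the generic setup. First, the definition of $\ll$ permits the smaller polynomial to have degree one less than the larger, so each part must separately treat the variants in which one of the addends drops a degree; in every such subcase the sign pattern $(-1)^{i-1}$ reappears and the boundary analysis at $\pm\infty$ fits the correct number of roots of $g+h$ or $f+h$. Second, coincident roots require a perturbation step: slightly modify the coefficients so that every interlacing becomes strict, run the sign/IVT argument, and pass to the limit using continuity of polynomial roots; both real-rootedness and the non-strict interlacing survive in the limit. The conventions $0\ll p$ and $p\ll 0$ handle the cases where one of $f,g,h$ is zero, and the nonnegative-coefficient hypothesis ensures that leading coefficients never cancel under addition, keeping the degree counts consistent with the definition of $\ll$.
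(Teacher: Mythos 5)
The paper does not prove this lemma: it is cited verbatim from Wagner (``Total positivity of Hadamard products,'' Proposition 3.5), so there is no in-paper proof to compare against. Your sign-counting plus intermediate-value-theorem argument is the standard way to establish this fact, and the substance of the argument is sound, including the reduction of degeneracies to the strict case by perturbation and the degree bookkeeping for the $\deg = n-1$ variants.

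There is, however, a consistent convention mismatch with the paper that you should be aware of. In the paper's definition, $g \ll f$ (``$g$ interlaces $f$'') means $v_n \le u_n \le v_{n-1} \le \cdots \le v_1 \le u_1$, i.e.\ the roots of the polynomial on the \emph{left} of $\ll$ are shifted toward $-\infty$, and the left polynomial has degree $\le$ that of the right. Your working hypothesis ``$f \ll g$ gives $u_{i+1} < v_i < u_i$'' places the roots of $g$ to the left of the roots of $f$, which is the paper's $g \ll f$. Your argument is internally consistent with your reversed reading, so the two parts you establish are exactly the two bullets of the lemma, but with their labels interchanged. In part (2) there is also a small bookkeeping slip under your own convention: evaluating $f$ at the roots $r_i$ of $g$ gives $\operatorname{sign} f(r_i) = (-1)^i$, not $(-1)^{i-1}$, since all $i$ factors $r_i - u_j$ with $j \le i$ are negative; the two summands still agree in sign, so the conclusion is unaffected, but the endgame sign analysis (no root below the smallest $r_i$, one root above the largest) then looks slightly different from what you wrote. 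Finally, when removing degeneracies it is cleaner to perturb the \emph{roots} rather than ``the coefficients,'' since perturbing roots manifestly preserves real-rootedness, whereas an arbitrary coefficient perturbation need not.
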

We also say that the roots of $g(t)$ and $f(t)$ are alternating if $f(t) \ll g(t)$ or $g(t) \ll f(t)$.
The main tool we use to prove the property of alternating roots is the following Hermite-Kakeya-Obreschkoff theorem.
\begin{thm}[Hermite-Kakeya-Obreschkoff, {\cite[Theorem~2.4]{branden2006linear}, \cite[Theorem~4.1]{dedieu1992obreschkoff}}]
\label{HKO}
Let $p(t)$ and $q(t)$ be two polynomials with real coefficients. The roots of $p(t)$ and $q(t)$ are
 alternating if and only if for any real numbers $ \lambda$ and $\mu$, the polynomial $\lambda p(t)+\mu q(t)$ has only real roots.
\end{thm}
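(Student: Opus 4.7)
The plan is to prove the two directions of the Hermite--Kakeya--Obreschkoff theorem separately. The forward implication (alternating roots $\Rightarrow$ every real linear combination is real-rooted) will rest on a partial-fraction and sign-counting argument, while the reverse direction will rely on a continuity argument via Hurwitz's theorem on limits of real-rooted polynomials.

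For the forward direction, assume without loss of generality that $\deg p \ge \deg q$, that $p$ and $q$ have positive leading coefficients, and that their roots are simple and interleave strictly (multiplicities and coincidences can be handled afterwards by perturbation). Writing $q(t)/p(t)$ in partial fractions as $\sum_i c_i/(t-u_i)$ plus a possibly present polynomial part, the interleaving hypothesis together with positivity of leading coefficients forces all residues $c_i = q(u_i)/p'(u_i)$ to carry the same sign. Hence $q/p$ is strictly monotonic on each interval $(u_{i+1},u_i)$ and attains every real value there. Consequently, for any $(\lambda,\mu) \neq (0,0)$ with $\mu \neq 0$, the equation $q(t)/p(t) = -\lambda/\mu$ has at least $n-1$ real solutions in the interior, and the asymptotic behavior at $\pm\infty$ supplies the remaining roots; the case $\mu = 0$ is immediate since $p$ is real-rooted. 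Thus $\lambda p + \mu q$ is real-rooted.

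For the reverse direction, suppose every real linear combination $\lambda p + \mu q$ is real-rooted. I would study the one-parameter family $f_s(t) = p(t) + s\,q(t)$ for $s \in \mathbb{R}$. Since each $f_s$ is real-rooted, its roots trace continuous real paths without ever coalescing into complex conjugate pairs as $s$ varies. Tracking how the roots of $f_s$ sweep across $\mathbb{R}$ between $s=0$ (giving the roots of $p$) and $|s|\to\infty$ (giving, after rescaling, the roots of $q$) then forces the strict interleaving pattern in the statement. The main obstacle I expect is handling the degenerate cases of unequal degrees, coincident roots, or higher multiplicities, where this root tracking is delicate. The standard remedy is to perturb $p$ and $q$ slightly so that the roots become simple and distinct, carry out the argument in this generic case, and then invoke Hurwitz's theorem (limits of real-rooted polynomials are real-rooted) to pass back to the limit; the weak interlacing inequalities survive this limiting procedure, which yields the alternation in the full generality of the statement.
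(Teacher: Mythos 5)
The paper does not prove Theorem~\ref{HKO}; it is a classical result cited from Br{\"a}nd{\'e}n~\cite{branden2006linear} and Dedieu~\cite{dedieu1992obreschkoff}, so there is no in-text proof to compare against. Your forward direction (interlacing implies every real linear combination is real-rooted) is the standard argument and is essentially correct: under strict interlacing with simple roots and positive leading coefficients, the residues $c_i = q(u_i)/p'(u_i)$ all share a sign because $q(u_i)$ and $p'(u_i)$ each alternate in sign with $i$ in the same pattern, hence $q/p$ is monotone on each interval between consecutive poles, and counting the real solutions of $q(t)/p(t) = -\lambda/\mu$ on those intervals together with the behaviour at $\pm\infty$ accounts for all roots of $\lambda p + \mu q$.

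The reverse direction, however, has a genuine gap. The claim that ``tracking how the roots of $f_s = p + sq$ sweep across $\mathbb{R}$ forces the strict interleaving pattern'' is an assertion, not an argument; continuity of the root paths alone does not prevent two roots of $q$ from ending up between the same pair of consecutive roots of $p$. You need a mechanism that rules this out. One standard route: set $r(t)=q(t)/p(t)$ (after removing common factors) and note that $r'(t) = -W[p,q](t)/p(t)^2$, so if $W[p,q]$ changed sign then $r$ would have a local extremum at some $t_0$ with critical value $c$; for $c'$ just past $c$ the two real solutions of $r(t)=c'$ near $t_0$ would coalesce and become nonreal, so $q - c'p$ would acquire nonreal roots, contradicting the hypothesis. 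Thus $W[p,q]$ has constant sign, which is equivalent to interlacing. Finally, the perturbation-plus-Hurwitz remedy you propose does not straightforwardly apply here: perturbing $p$ and $q$ can destroy the hypothesis that all combinations $\lambda p + \mu q$ are real-rooted, so you cannot reduce the converse direction to the generic case by that device; for coincident or repeated roots you should instead factor out the common part and run the Wronskian (or equivalent) argument directly.
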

However, in this form of the Hermite-Kakeya-Obreschkoff theorem, we cannot determine whether $g(t)$ interlaces $f(t)$ or $f(t)$ interlaces $g(t)$. To address this, we next recall the Wro\'nskian notation of two polynomials.
The Wro\'nskian of two polynomials $f(t)$ and $g(t)$ is defined as $W[f, g] = f' g - f g'$. It is easy to see that for any pair of polynomials $f(t)$ and $g(t)$ with interlacing roots, the Wro\'nskian $W[f, g]$ is either nonnegative on all of $\mathbb{R}$ or nonpositive on all of $\mathbb{R}$. Moreover, if $W[f, g] \leq 0$, then $f \ll g$ (see \cite[P. 4]{wagner2011multivariate}). Therefore, we only need to check the signs of $W[f, g]$ for certain values of $t$.
`
\begin{defn}
A sequence $\Gamma=\{\gamma_k\}_{k=0}^{\infty}$ of real numbers is called a \emph{multiplier sequence} if, whenever any real polynomial
$f(t)=\sum_{k=0}^{n}a_kt^k$ has only real roots, so does the polynomial $\Gamma[f(t)]=\sum_{k=0}^{n}\gamma_ka_kt^k.$
\end{defn}
\begin{defn}
A sequence $\Gamma=\{\gamma_i\}_{k=0}^{n}$ is called an \emph{$n$-sequence} if for every polynomial $f(t)$ of degree less than or equal to $n$ and with only real roots, the polynomial $\Gamma[f(t)]$ also has only real roots.
\end{defn}
\begin{thm}[{P\'olya-Schur, \cite{polya1914zwei}, \cite[Theorem 3.3]{craven2004composition}}]\label{po}
	Let $\Gamma=\{\gamma_k\}_{k=0}^{\infty}$ be a sequence of real numbers. Then $\Gamma$ is a multiplier sequence if and only if the roots of the polynomial 	$\Gamma[(1+t)^n]$ are all real and of the same sign for all integers $n>0$.
	\end{thm}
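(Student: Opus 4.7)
The plan is to prove both directions of this equivalence, with the forward implication being elementary and the reverse constituting the substantive content.

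For the forward (necessity) direction, I would first observe that $(1+t)^n$ has only real roots (all equal to $-1$), so whenever $\Gamma$ is a multiplier sequence the polynomial $\Gamma[(1+t)^n] = \sum_{k=0}^n \gamma_k \binom{n}{k} t^k$ is automatically real-rooted by definition. To obtain the same-sign assertion for the roots of $\Gamma[(1+t)^n]$, I would apply the multiplier property to a one-parameter perturbation such as $(1+t)^{n-1}(1+ct)$ with $c>0$ a free parameter and run a continuity argument: if two roots of $\Gamma[(1+t)^n]$ had opposite signs, then for a suitable choice of $c$ the parametric family $\Gamma[(1+t)^{n-1}(1+ct)]$ would develop a conjugate pair of non-real roots, contradicting real-rootedness. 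Tracking the root locus via a Hurwitz-type argument yields the uniform sign condition.

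For the reverse (sufficiency) direction, the strategy is to bootstrap from the assumed behaviour on the single family $\{(1+t)^n\}_{n \geq 1}$ to arbitrary real-rooted polynomials $f(t) = \sum_{k=0}^n b_k t^k$. The central ingredient is a Schur-type composition theorem: if $p(t) = \sum_{k=0}^n a_k t^k$ has only real roots all of the same sign and $q(t) = \sum_{k=0}^n b_k t^k$ has only real roots, then the Hadamard-normalized product $\sum_{k=0}^n \binom{n}{k}^{-1} a_k b_k\, t^k$ has only real roots. Specializing to $p(t) = \Gamma[(1+t)^n]$, where by assumption the roots are real and of one sign and where $a_k = \gamma_k \binom{n}{k}$, the Hadamard-normalized product collapses precisely to $\sum_k \gamma_k b_k t^k = \Gamma[f(t)]$, which is therefore real-rooted. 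A standard Hurwitz continuity argument then handles the case in which the transformation depresses the degree or some $\gamma_k$ vanishes.

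The main obstacle is establishing the Schur composition lemma itself, which is the technical heart of the argument. My preferred route is via the Hermite--Poulain theorem, which asserts that applying a differential operator $p(d/dt)$ whose symbol $p$ has only real roots of a fixed sign preserves real-rootedness. Writing the Hadamard-normalized product as an iterated composition of first-order operators indexed by the roots of $p$ then yields the claim; the delicate point is that the sign condition on the roots must be maintained through each step of the iteration, which is precisely why the hypothesis on $\{(1+t)^n\}$ must include the same-sign requirement rather than just real-rootedness. A more conceptual alternative extends $\Gamma$ to an operator on the Laguerre--P\'olya class of entire functions and exploits the density of the test family $\{(1+t)^n\}$ there, but such an extension still ultimately rests on the same composition principle.
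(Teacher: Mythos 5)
The paper does not prove this theorem; it is quoted as a classical result of P\'olya--Schur with citations to the 1914 original and to Craven--Csordas. So there is no ``paper's own proof'' to compare against, and the only question is whether your sketch is sound.

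Your reverse (sufficiency) direction is the standard path. The key lemma you invoke --- that the normalized Hadamard product $\sum_{k} \binom{n}{k}^{-1} a_k b_k t^k$ of a real-rooted $p=\sum a_k t^k$ with roots of one sign and a real-rooted $q=\sum b_k t^k$ is again real-rooted --- is precisely the Malo--Schur--Szeg\H{o} composition theorem, and specializing $p=\Gamma[(1+t)^n]$ does collapse the composition to $\Gamma[q]$ as you say. Be aware, though, that the classical proof of the composition lemma runs through Grace's apolarity theorem (Grace--Walsh--Szeg\H{o}) rather than Hermite--Poulain; the iterated-differential-operator route you prefer needs the careful bookkeeping you allude to, and it is not obviously shorter. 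Your remark about needing a Hurwitz limiting argument to handle degree drops or vanishing $\gamma_k$ is correct and is also how Craven--Csordas handle it.

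The forward (necessity) direction is where the sketch is genuinely incomplete. Real-rootedness of $\Gamma[(1+t)^n]$ is indeed immediate, but the same-sign assertion is not a routine perturbation: the phrase ``for a suitable choice of $c$ the family $\Gamma[(1+t)^{n-1}(1+ct)]$ would develop a non-real conjugate pair'' is a hope, not an argument, since nothing you've written forces a collision of roots. The classical route is algebraic, not topological: apply $\Gamma$ to the small test polynomials $t^{j}(1+t)^2$ to get $t^{j}\bigl(\gamma_{j+2}t^2 + 2\gamma_{j+1}t + \gamma_j\bigr)$, whose real-rootedness forces the Tur\'an inequalities $\gamma_{j+1}^2 \geq \gamma_j\gamma_{j+2}$ for all $j$; combined with similar tests on $t^{j}(1+t)$ and $t^{j}(1+t)^3$ one deduces that the nonzero $\gamma_k$ occupy a block of consecutive indices and are either all of one sign or strictly alternating. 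That sign/alternation pattern on $\{\gamma_k\}$ is exactly equivalent to the roots of $\Gamma[(1+t)^n]$ all having the same sign, since a real-rooted polynomial has all roots nonpositive iff its coefficients are all of one sign and all roots nonnegative iff the coefficients alternate. You should replace the root-locus heuristic with this argument.
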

\begin{thm}[\cite{craven1983location1}, Algebraic Characterization of $n$-sequences]\label{ch-n}
Let $\Gamma=\{\gamma_k\}_{k=0}^{n}$ be a sequence of real numbers. Then $\Gamma$ is an $n$-sequence if and only if the roots of the polynomial
$\Gamma[(1+t)^n]$ are all real and of the same sign.
\end{thm}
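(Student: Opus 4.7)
The statement is an equivalence, so the plan is to address the two directions separately.

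For the forward direction, since $(1+t)^n = \sum_{k=0}^n \binom{n}{k}t^k$ has only real roots, the assumption that $\Gamma$ is an $n$-sequence immediately yields that $\Gamma[(1+t)^n]$ is real-rooted. To show in addition that all roots share a common sign, I would apply $\Gamma$ to the one-parameter family $(1+xt)^n$ for $x\in\mathbb{R}$; each image is real-rooted and its roots vary continuously with $x$. A perturbation argument would then rule out the coexistence of roots of opposite signs in $\Gamma[(1+t)^n]$, since such a configuration would force a root of $\Gamma[(1+xt)^n]$ to cross zero for some nearby $x$ in a way incompatible with the real-rootedness of every member of the family.

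For the backward direction, the key tool I would invoke is the classical Schur--Szeg\H{o} composition theorem. Given any polynomial $f(t)=\sum_{k=0}^n a_k t^k$ of degree at most $n$ with only real roots, rewrite its coefficients as $a_k=\binom{n}{k}A_k$. Then
\[
\Gamma[f(t)] \;=\; \sum_{k=0}^n \binom{n}{k}(\gamma_k A_k)\, t^k
\]
coincides with the Schur--Szeg\H{o} composition of the two polynomials $f(t)=\sum_k \binom{n}{k}A_k t^k$ and $\Gamma[(1+t)^n]=\sum_k \binom{n}{k}\gamma_k t^k$. By hypothesis $\Gamma[(1+t)^n]$ is real-rooted with all roots of the same sign, while $f$ is real-rooted by assumption, so the Schur--Szeg\H{o} theorem guarantees that the composition, i.e.\ $\Gamma[f(t)]$, is again real-rooted. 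This shows that $\Gamma$ preserves real-rootedness throughout $\mathbb{R}[t]_{\le n}$, which is the definition of an $n$-sequence.

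The main obstacle I expect is invoking the Schur--Szeg\H{o} composition theorem in exactly the form needed, together with handling degenerate cases---polynomials $f$ of degree strictly less than $n$, vanishing $\gamma_k$, and roots of $f$ at the origin---which typically requires a limiting/approximation step to place oneself in the generic setting where the theorem applies cleanly. A secondary subtlety lies in the forward direction: making the continuity argument rigorous requires carefully tracking the real roots of $\Gamma[(1+xt)^n]$ through possible coalescences as $x$ varies over $\mathbb{R}$, so that one can legitimately conclude that no root of $\Gamma[(1+t)^n]$ can lie on one side of zero while another lies on the other.
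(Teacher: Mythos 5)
The paper does not prove this theorem; it is quoted verbatim from Craven and Csordas \cite{craven1983location1}. So I am evaluating your argument on its own terms rather than against a proof in the paper.

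Your backward direction is the standard and correct argument: rewriting $\Gamma[f]$ as the Schur--Szeg\H{o} (Malo--Schur--Szeg\H{o}) composition of $f$ with $\Gamma[(1+t)^n]$ is exactly how sufficiency is established, and the composition theorem requires precisely that one of the two factors have all roots of one sign, which is what the hypothesis on $\Gamma[(1+t)^n]$ supplies. You rightly flag that degenerate cases (degree $<n$, vanishing coefficients, roots at the origin) need a limiting step.

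Your forward direction, however, has a real gap in the ``same sign'' half. Observe that if $g(t):=\Gamma[(1+t)^n]=\sum_k \binom{n}{k}\gamma_k t^k$, then $\Gamma[(1+xt)^n]=\sum_k \binom{n}{k}\gamma_k x^k t^k = g(xt)$. So the one-parameter family you propose is nothing but a rescaling of $g$: for $x>0$ the roots of $g(xt)$ are those of $g$ divided by $x$, and for $x<0$ the signs all flip simultaneously. No root of $g(xt)$ ever crosses the origin as $x$ varies through nonzero values, and every nonzero member of the family is real-rooted whenever $g$ is. Hence this family can never detect a conflict between ``$\Gamma$ is an $n$-sequence'' and ``$g$ has roots of both signs'', and the proposed perturbation/continuity argument cannot close. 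To get the ``same sign'' conclusion you must feed $\Gamma$ a test polynomial that is not a rescaled binomial. For instance, when $n=2$: if $g(t)=\gamma_0+2\gamma_1 t+\gamma_2 t^2$ had roots of opposite signs then $\gamma_0\gamma_2<0$, and applying $\Gamma$ to the real-rooted $f(t)=t^2-1$ yields $\gamma_2 t^2-\gamma_0$, whose roots are non-real --- contradicting that $\Gamma$ is a $2$-sequence. The general case requires an analogous but more carefully chosen family of real-rooted test polynomials (e.g.\ products $(1+t)^{n-j}(1-t)^j$ or shifted/reversed binomials), not $(1+xt)^n$.
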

The following two lemmas will be used frequently later. Although they  are well-known and the proofs are straightforward, we provide them here for  the reader’s convenience
\begin{lem}\label{ms1}
 For positive integers $k\geq l\geq 1$, we have
 \[
 \left\{ \frac{1}{i! \prod_{j=l}^{k} {(i+j)} } \right\}_{i=0}^{\infty}
 \]
 is a multiplier sequence.
\end{lem}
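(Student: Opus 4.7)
The plan is to apply the P\'olya--Schur characterization (Theorem~\ref{po}) and reduce the problem to showing that $\Gamma[(1+t)^n]$ has only real roots of the same sign for every $n \geq 1$. The guiding observation is the factorisation
\[
\gamma_i \;=\; \frac{1}{i!\prod_{j=l}^{k}(i+j)} \;=\; \frac{(i+l-1)!}{i!\,(i+k)!} \;=\; \frac{(i+1)(i+2)\cdots(i+l-1)}{(i+k)!},
\]
which writes $\gamma_i = P(i)\,\sigma_i$, where $P(i) = \prod_{m=1}^{l-1}(i+m)$ (the empty product, equal to $1$, when $l=1$) and $\sigma_i = 1/(i+k)!$. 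Because the pointwise product of two multiplier sequences is again a multiplier sequence (immediate from $(\Gamma_1 \Gamma_2)[f] = \Gamma_1[\Gamma_2[f]]$), it suffices to verify that $\{P(i)\}_{i\geq 0}$ and $\{\sigma_i\}_{i\geq 0}$ are each multiplier sequences.

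For $\{P(i)\}$ I would check each linear factor $\{i+m\}_{i\geq 0}$ with $m \geq 1$ individually, via the direct computation
\[
\sum_{i=0}^{n}(i+m)\binom{n}{i}t^i \;=\; m(1+t)^n + nt(1+t)^{n-1} \;=\; (1+t)^{n-1}\bigl(m+(m+n)t\bigr),
\]
whose roots are $-1$ (with multiplicity $n-1$) and $-m/(m+n)$, all strictly negative. Theorem~\ref{po} then yields that $\{i+m\}$ is a multiplier sequence, and multiplying over $m=1,\ldots,l-1$ disposes of the polynomial factor.

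The more substantive step is to show that $\{\sigma_i\}=\{1/(i+k)!\}$ is a multiplier sequence, and here the plan is to establish the closed form
\[
\sum_{i=0}^{n}\binom{n}{i}\,\frac{t^i}{(i+k)!} \;=\; \frac{n!}{(n+k)!}\,F_{n+k}^{(k)}(t),
\]
where $F_m(t) := \sum_{j=0}^{m}\binom{m}{j}\,t^j/j!$ and the superscript $(k)$ denotes the $k$-th derivative. The identity follows from a short bookkeeping computation using $(d/dt)^k[t^j/j!] = t^{j-k}/(j-k)!$ for $j\geq k$. Since $F_m(t)$ coincides with the classical Laguerre polynomial $L_m$ evaluated at $-t$, its $m$ roots are simple and strictly negative, so Rolle's theorem forces $F_{n+k}^{(k)}$ to have $n$ real roots that also lie in $(-\infty,0)$. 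Invoking Theorem~\ref{po} once more shows that $\{\sigma_i\}$ is a multiplier sequence, and therefore so is $\{\gamma_i\} = \{P(i)\sigma_i\}$. The only step requiring any genuine calculation is the derivative identity above; everything else is combinatorial bookkeeping or a direct appeal to the real-rootedness of Laguerre polynomials.
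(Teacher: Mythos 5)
Your proof is correct and, at the structural level, follows the same decomposition as the paper: write $\gamma_i$ as a polynomial in $i$ with only negative integer zeros times $1/(i+k)!$, show each factor is a multiplier sequence, and multiply. Two points are worth making.

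First, your algebra fixes what appears to be a typo in the paper. The paper writes $\frac{1}{i!\prod_{j=l}^{k}(i+j)} = \frac{\prod_{j=l}^{k}(i+j)}{(i+k)!}$, which is false (take $l=1$); the correct identity is the one you give, $\gamma_i = \frac{\prod_{m=1}^{l-1}(i+m)}{(i+k)!}$. The paper's subsequent appeal to Laguerre's theorem for "$\prod_{j=l}^{k}(t+j)$" should therefore read $\prod_{m=1}^{l-1}(t+m)$; the argument goes through unchanged once this is corrected.

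Second, where the paper simply cites Laguerre's classical theorem (applied to the entire function $1/\Gamma(t+1+k)$ and to a polynomial with negative zeros, both in the Laguerre--P\'olya class), you instead verify the P\'olya--Schur criterion (Theorem~\ref{po}) directly for each factor: the computation $\sum_i (i+m)\binom{n}{i}t^i = (1+t)^{n-1}\bigl(m+(m+n)t\bigr)$ for the linear factors, and the identity $\sum_i \binom{n}{i}\frac{t^i}{(i+k)!} = \frac{n!}{(n+k)!}F_{n+k}^{(k)}(t)$ with $F_m(t) = L_m(-t)$ plus Rolle's theorem for the reciprocal-factorial factor. Both computations check out. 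What this buys is a self-contained elementary argument that avoids invoking the transcendental Laguerre--P\'olya machinery, at the cost of a slightly longer (but more transparent) verification; the paper's route is shorter but leans on a nontrivial classical theorem.
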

\begin{proof}
It is clear that
 \begin{align*}
 \frac{1}{i! \prod_{j=l}^{k} {(i+j)} } = \frac{\prod_{j=l}^{k} {(i+j)} }{(i+k)! }.
 \end{align*}
Applying a classical theorem due to Laguerre to the function $\frac{1}{\Gamma(t+1+k)}$ and $\prod_{j=l}^{k} {(t+j)} $, it is easy to see that both $ \frac{1 }{(i+k)! }$ and $\prod_{j=l}^{k} {(i+j)} $ are multiplier sequences, see also \cite[p.~270]{titchmarsh1939theory} or a more recent literature \cite[Theorem 4.1]{craven2004composition}.
\end{proof}
\begin{lem}\label{ns1}
 For integers $k\geq l\geq1$, we have
 \[
 \left\{ \frac{1}{(d-i)! \prod_{j=l}^{k} {(d-i+j)} } \right\}_{i=0}^{d}
 \]
 is a $d$-sequence.
 In particular, for any nonnegative integer $m$, \[
	\left\{ \frac{1}{(d-i+m)! } \right\}_{i=0}^{d}
	\]
	is a $d$-sequence.
\end{lem}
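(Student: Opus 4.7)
The plan is to invoke Theorem \ref{ch-n}, which reduces the claim to showing that the polynomial
\[
\Gamma\bigl[(1+t)^d\bigr] \;=\; \sum_{i=0}^{d}\binom{d}{i}\gamma_i\,t^i, \qquad \gamma_i \;=\; \frac{1}{(d-i)!\,\prod_{j=l}^{k}(d-i+j)},
\]
has only real roots, all of the same sign. The key observation driving everything is that $\gamma_i$ is the coefficient reversal of the multiplier sequence $\gamma'_i=\frac{1}{i!\,\prod_{j=l}^{k}(i+j)}$ already furnished by Lemma \ref{ms1}; explicitly, $\gamma_i=\gamma'_{d-i}$.

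First, I would exploit the symmetry $\binom{d}{i}=\binom{d}{d-i}$ to reindex and obtain the identity
\[
\Gamma\bigl[(1+t)^d\bigr] \;=\; t^{d}\,\Gamma'\bigl[(1+t)^d\bigr]\big|_{t\mapsto 1/t},
\]
realising $\Gamma[(1+t)^d]$ as the coefficient reversal of $\Gamma'[(1+t)^d]$. Next, since $\Gamma'$ is a multiplier sequence by Lemma \ref{ms1}, the P\'olya--Schur theorem (Theorem \ref{po}) guarantees that $\Gamma'[(1+t)^d]$ is real-rooted with all roots of a common sign. Its constant term is $\gamma'_0=1/\prod_{j=l}^{k}j\neq 0$, so every root is nonzero; consequently, the reversed polynomial $t^{d}\Gamma'[(1+t)^d]|_{t\mapsto 1/t}$ has the same degree $d$ and its roots are the reciprocals of those of $\Gamma'[(1+t)^d]$---hence still real and still of a common sign. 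Theorem \ref{ch-n} then delivers the desired $d$-sequence property for $\Gamma$.

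For the ``in particular'' clause with $m\geq 1$, I would specialise the main result to $l=1$ and $k=m$, upon which the telescoping identity $(d-i)!\prod_{j=1}^{m}(d-i+j)=(d-i+m)!$ collapses the denominator to $(d-i+m)!$. The residual case $m=0$ falls outside the hypothesis $k\geq l\geq 1$, but the very same reversal argument applies verbatim with $\Gamma'$ replaced by the classical multiplier sequence $\{1/i!\}_{i=0}^{\infty}$ (itself an instance of Laguerre's theorem applied to $1/\Gamma(t+1)$, in the same spirit as the proof of Lemma \ref{ms1}).

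I do not foresee any serious obstacle. The only point requiring care---and hence the main (minor) obstacle---is verifying that $\gamma'_0 \neq 0$, so that passing to the reversed polynomial is degree-preserving and merely reciprocates each (nonzero) root without altering its sign; the general principle at work is that a multiplier sequence whose zeroth term is nonzero reverses, upon truncation at index $d$, to a $d$-sequence.
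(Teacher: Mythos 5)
Your proposal is correct and takes essentially the same route as the paper: both invoke Theorem \ref{ch-n}, identify the target polynomial $\Gamma[(1+t)^d]$ as the coefficient reversal of $\Gamma'[(1+t)^d]$ with $\Gamma'$ the multiplier sequence of Lemma \ref{ms1}, and conclude real-rootedness by reversing. The paper compresses the argument into the single identity \eqref{ns2} and a one-line appeal to Lemma \ref{ms1}, leaving implicit both the ``same sign'' requirement in Theorem \ref{ch-n} (which you correctly note follows from positivity of all coefficients) and the nonvanishing of the constant term needed for the reversal to be degree-preserving. You also spot that the $m=0$ instance of the ``in particular'' clause falls outside the hypothesis $k\geq l\geq 1$ and must be handled separately via the multiplier sequence $\{1/i!\}$; the paper elides this case entirely. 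So your write-up is a slightly more careful rendering of the same proof.
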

\begin{proof}
By Theorem \ref{ch-n}, we need to show that
\begin{align}
\sum_{i=0}^{d} \binom{d}{i} \frac{1}{(d-i)! \prod_{j=l}^{k} {(d-i+j)} } t^i =\sum_{i=0}^{d} \binom{d}{i} \frac{1}{i! \prod_{j=l}^{k} {(i+j)} } t^{d-i}\label{ns2}
\end{align}
has only real roots.
The real-rootedness of the right-hand side of \eqref{ns2} is equivalent to that of $$\sum_{i=0}^{d} \binom{d}{i} \frac{1}{i! \prod_{j=l}^{k} {(i+j)} } t^{i},$$ which follows from Lemma \ref{ms1}.
 \end{proof}
In this paper, we often need to prove that certain specific cubic or quadratic polynomials have only real roots. Although the form is consistent, the degree may vary. We require the following fact.
\begin{lem}\label{disc}
If $$\Delta(a,b,c):=18\,abcd-4\,b^{3}d+b^{2}c^{2}-4\,ac^{3}-27\,a^{2}d^{2}>0,$$
then
the polynomial $f(t):=a t^{3}+b t^{2}+c t+d$ has only real roots.
\end{lem}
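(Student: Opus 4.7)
The plan is to recognize $\Delta(a,b,c,d)$ (the statement has an evident typo writing $\Delta(a,b,c)$) as precisely the classical discriminant of the cubic $at^{3}+bt^{2}+ct+d$, and then invoke the standard criterion that a real cubic has only real roots if and only if its discriminant is nonnegative. I would split into the genuine cubic case $a\neq 0$ and the degenerate case $a=0$.

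For $a\neq 0$, I would apply the Tschirnhaus substitution $t\mapsto s-\tfrac{b}{3a}$ to reduce to the depressed cubic $s^{3}+ps+q$ with $p=\tfrac{3ac-b^{2}}{3a^{2}}$ and $q=\tfrac{2b^{3}-9abc+27a^{2}d}{27a^{3}}$. A direct computation yields the identity $-4p^{3}-27q^{2}=\Delta(a,b,c,d)/a^{4}$, so the hypothesis $\Delta>0$ is equivalent to $-4p^{3}-27q^{2}>0$, which in particular forces $p<0$. From here the conclusion can be obtained either from the trigonometric (\emph{casus irreducibilis}) form of Cardano's formula, which explicitly exhibits three real roots whenever $-4p^{3}-27q^{2}>0$, or from the product representation $\Delta=a^{4}\prod_{i<j}(r_{i}-r_{j})^{2}$: any complex-conjugate pair $\alpha\pm i\beta$ among the roots would contribute a factor $(2i\beta)^{2}=-4\beta^{2}<0$, which is incompatible with $\Delta>0$.

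For $a=0$, the polynomial is at most quadratic and the expression simplifies to $\Delta=b^{2}(c^{2}-4bd)$. Positivity forces $b\neq 0$ and $c^{2}-4bd>0$, which is exactly the discriminant condition for $bt^{2}+ct+d$ to have two distinct real roots. I do not foresee any serious obstacle in this argument: the content is entirely classical, and the only care required is the uniform treatment of the degenerate case, since in the subsequent applications the polynomial may appear without an a priori guarantee that the leading coefficient is nonzero.
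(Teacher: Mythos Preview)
Your proposal is correct and follows the same approach as the paper's proof: split into the cases $a\neq 0$ and $a=0$, identify $\Delta$ as the classical discriminant of $f$, and invoke the standard real-root criterion (respectively the quadratic discriminant when $a=0$). The only difference is that you spell out the Tschirnhaus reduction and the product-of-root-differences argument explicitly, whereas the paper simply cites the discriminant criterion without further justification.
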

\begin{proof}
If $a\not = 0$.
Recall that $\Delta(a,b,c)$ is just the discriminant of $f(t)$.
Then the cubic polynomial $f(t)$ has three distinct real roots by the definition of the discriminant.
If $a=0$, we have $\Delta(0,b,c)=b^2(c^2-4 b d )>0$. Thus, $b\not=0$ and $f(t)$ have two distinct real roots.
\end{proof}
\section{Log-concavity for paving matroid}
This section aims to prove Theorem \ref{paving-rz}. Our approach relies on the alternating of roots.
Recall that Gao and Xie~\cite{gao2021inverse} obtained the following explicit formula for the inverse \KL polynomials of any uniform matroid,
\begin{align}\label{Qmd}
Q_{U_{m,d}}(t)=\sum_{i=0}^{\lfloor (d-1) /2\rfloor}\frac{m(d-2i)}{(m+i)(d-i+m)}\binom{m+d}{d}\binom{d}{i}t^i.
\end{align}
Ferroni, Nasr, and Vecchi~\cite{ferroni2023stressed} obtained the inverse \KL polynomial of any paving matroid.
\begin{thm}[{\cite[Theorem 4.4]{ferroni2023stressed}}]\label{paving_coeff}
 Let $M$ be a paving matroid of rank $d$ and cardinality $m+d$. Suppose $M$ has exactly $\lambda_h$ (stressed) hyperplanes of cardinality $h+d-1$. Then
 \begin{align*}
 Q_{M}(t)=Q_{U_{m,d}}(t)-\sum_{h=1}^{m} \lambda_h\ (Q_{U_{h,d}}(t)-Q_{U_{h,d-1}}(t)).
 \end{align*}
\end{thm}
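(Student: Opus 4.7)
The plan is to use the operation of circuit-hyperplane relaxation to compare $M$ with the uniform matroid $U_{m,d}$. A paving matroid of rank $d$ on $m+d$ elements becomes $U_{m,d}$ after each of its stressed hyperplanes is relaxed, so the target identity amounts to showing that each relaxation step changes $Q$ by exactly $Q_{U_{h,d}}(t)-Q_{U_{h,d-1}}(t)$, where $h+d-1$ is the size of the relaxed hyperplane. Summing these contributions over the $\lambda_h$ stressed hyperplanes of each size, and tracking signs, gives the stated formula.

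First, I would isolate the effect of a single relaxation. Let $H$ be a stressed hyperplane of $M$ of size $h+d-1$ and let $\tilde{M}$ be the matroid obtained from $M$ by relaxing $H$. Since $M$ and $\tilde{M}$ have the same ground set and rank, and their flat lattices differ only at flats contained in or equal to $H$, the defining recursion for $Q_M(t)$ in terms of flats together with KL polynomials of restrictions and contractions reduces the difference $Q_{\tilde{M}}(t)-Q_M(t)$ to a local expression supported on the sublattice at $H$. Because $M$ is paving, the restriction $M|H$ is the uniform matroid $U_{h,d-1}$ and the contraction $M/H$ is uniform of rank one, so this local expression can be rewritten entirely in uniform inverse KL polynomials.

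Second, I would establish the explicit identification of the single-relaxation correction with $Q_{U_{h,d}}(t)-Q_{U_{h,d-1}}(t)$. This relies on the closed form \eqref{Qmd} for $Q_{U_{m,d}}(t)$ together with a binomial identity that matches the local contribution from $H$. Because distinct stressed hyperplanes of a paving matroid pairwise intersect in at most $d-2$ elements, the local corrections at different hyperplanes do not interfere; the total change is therefore additive, and iterating the single-relaxation formula along a sequence of relaxations from $M$ to $U_{m,d}$ produces the claimed closed form.

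The main obstacle is the rigorous derivation of the single-hyperplane change formula. Unwinding the inverse KL recursion for $M$ and $\tilde{M}$ to pinpoint exactly which terms differ, and then matching the aggregated local expression with the compact form $Q_{U_{h,d}}(t)-Q_{U_{h,d-1}}(t)$, requires delicate bookkeeping of uniform-matroid KL and inverse KL polynomials together with the flat-lattice description of a paving matroid. A secondary technical point is verifying that no subtle cross-terms arise from flats strictly containing $H$, which would otherwise break the clean additive structure of the relaxation step.
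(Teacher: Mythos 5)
This theorem is quoted verbatim from Ferroni, Nasr, and Vecchi (Theorem 4.4 of \cite{ferroni2023stressed}); the present paper cites it without proof, so there is no in-paper argument to compare your attempt against. Assessing your proposal on its own terms: the overall strategy --- passing from $M$ to $U_{m,d}$ by relaxing stressed hyperplanes one at a time, and showing that each step contributes $Q_{U_{h,d}}(t)-Q_{U_{h,d-1}}(t)$ --- does match the approach of \cite{ferroni2023stressed}, and the observation that for a stressed hyperplane $H$ of a paving matroid the restriction $M|_H$ is $U_{h,d-1}$ while the contraction $M/H$ is uniform of rank one is correct and central to that argument.

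Two issues are worth flagging. First, the operation you call ``circuit-hyperplane relaxation'' is too narrow: a stressed hyperplane of cardinality $h+d-1$ in a rank-$d$ matroid is a circuit only when $h=1$. For $h>1$ you need the more general relaxation of a stressed hyperplane introduced in \cite{ferroni2023stressed}, which adds \emph{all} $d$-subsets of $H$ to the family of bases, not a single one. This is not a cosmetic point: the combinatorics of how the lattice of flats changes, and hence the shape of the local correction term, is different from the classical circuit-hyperplane case. Second, your sketch stops exactly where the work begins. You assert, but do not derive, that the single-relaxation correction equals $Q_{U_{h,d}}(t)-Q_{U_{h,d-1}}(t)$, and you likewise defer the non-interaction of corrections at distinct hyperplanes, labelling both as ``delicate bookkeeping.'' These two computations constitute the actual proof; without them, what you have is a roadmap consistent with the source rather than a proof. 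In particular, the claim that flats strictly containing $H$ create no cross-terms is not obvious and must be argued from the structure of paving matroids (the only flat strictly containing a hyperplane is the whole ground set, which helps, but the recursion also couples flats of lower rank through the $P$-polynomials of contractions).
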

Instead of directly proving Theorem \ref{paving-rz}, we prove
$$\B(Q_{U_{m,d}}(t))- \sum_{h=1}^{m} \lambda_h \ \B\left( (Q_{U_{h,d}}(t)-Q_{U_{h,d-1}}(t)) \right) $$
 has only real roots for any nonnegative numbers $\lambda_h$. To establish this, we will prove the following lemma.
\begin{lem}\label{h_m_int}
For any $1\leq h\leq m$, we have that
$$\B(Q_{U_{m,d}}(t)) \ll \B(Q_{U_{h,d}}(t)-Q_{U_{h,d-1}}(t)).$$
\end{lem}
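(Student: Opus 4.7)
The plan is to apply the Hermite-Kakeya-Obreschkoff theorem (Theorem \ref{HKO}) to establish alternation of roots, and then determine the direction via a Wrońskian sign calculation. Set $d':=\lfloor(d-1)/2\rfloor$; a quick check using \eqref{Qmd} shows that both $p_m(t):=\B(Q_{U_{m,d}}(t))$ and $r_h(t):=\B(Q_{U_{h,d}}(t)-Q_{U_{h,d-1}}(t))$ have degree exactly $d'$ (for $d$ even, the leading coefficients of $Q_{U_{h,d}}$ and $Q_{U_{h,d-1}}$ do not cancel), so $\B$ is unambiguously defined on both sides.

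The first step is to write out the coefficients explicitly. Using \eqref{Qmd}, the identity $\binom{m+d}{d}\binom{d}{i}=\frac{(m+d)!}{m!\,i!\,(d-i)!}$, and the partial-fraction identity $\frac{d-2i}{(m+i)(m+d-i)}=\frac{1}{m+i}-\frac{1}{m+d-i}$, the coefficient of $t^i$ in $p_m(t)$ takes the form $\binom{d'}{i}$ times a positive rational expression whose denominator is $i!\,(d-i)!\,(m+i)(m+d-i)$. A parallel computation for $r_h$, using $\binom{m+d}{d}\binom{d}{i}=\binom{m+d-1}{d-1}\binom{d-1}{i}\cdot\tfrac{m+d}{d-i}$ to place the two pieces of the difference over a common factorial denominator, brings the coefficient of $t^i$ in $r_h(t)$ into a closely related factorial form. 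These reciprocal factorials are precisely of the type controlled by Lemmas \ref{ms1} and \ref{ns1}.

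By Theorem \ref{HKO}, alternation is equivalent to showing that $\lambda p_m(t)+\mu r_h(t)$ has only real roots for every $\lambda,\mu\in\mathbb{R}$. The plan is to display this combination as $\Gamma[(1+t)^{d'}]$, where $\Gamma$ is a $d'$-sequence assembled from a reciprocal-factorial piece furnished by Lemma \ref{ns1} (accounting for the denominator $i!\,(d-i)!\,(m+i)(m+d-i)$) and from polynomial-in-$i$ pieces of the form $\prod_{j=l}^{k}(i+j)$ coming from Lemma \ref{ms1} (accounting for the numerators that emerge after the partial-fraction step). Real-rootedness of the combination then follows from Theorem \ref{ch-n}.

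Once alternation is established, the direction is fixed by the sign of the Wrońskian $W[p_m,r_h](t)=p_m'(t)r_h(t)-p_m(t)r_h'(t)$, which is constant on $\mathbb{R}$ under alternation. Evaluating at $t=0$ using only the constant and linear coefficients of $p_m$ and $r_h$ computed in step one should yield $W[p_m,r_h](0)\le 0$, whence the remark following Theorem \ref{HKO} gives $p_m\ll r_h$. The main technical obstacle is the explicit identification of $\Gamma$: the mixed signs in $r_h$ arising from the difference $Q_{U_{h,d}}-Q_{U_{h,d-1}}$ must be absorbed into positive-coefficient multiplier factors, and the parity of $d$ (which changes the degree relation between $Q_{U_{h,d}}$ and $Q_{U_{h,d-1}}$) may require parallel but analogous treatments.
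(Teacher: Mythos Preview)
Your plan attacks Lemma~\ref{h_m_int} directly via Theorem~\ref{HKO}, with both $m$ and $h$ free. The paper does \emph{not} do this: it explicitly notes that the direct route is computationally too heavy, and instead proves three auxiliary interlacings---$\B(Q_{U_{m,d}})\ll \B(Q_{U_{1,d}}-Q_{U_{1,d-1}})$ (Lemma~\ref{m1}), $\B(Q_{U_{h+1,d}}-Q_{U_{h+1,d-1}})\ll \B(Q_{U_{h,d}}-Q_{U_{h,d-1}})$ (Lemma~\ref{h+1h}), and $\B(Q_{U_{m,d}})\ll \B(Q_{U_{m,d}}-Q_{U_{m,d-1}})$---and then applies the transitivity Lemma~\ref{WeakTransitivity} to the chain $(\B(Q_{U_{m,d}}),\,r_m,\,r_{m-1},\,\ldots,\,r_1)$. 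Each auxiliary lemma has only \emph{one} matroid parameter free (either $m$ alone, or $h$ alone, or $h=m$), which is what makes the discriminant verifications tractable.

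The concrete gap in your proposal is the step ``display $\lambda p_m+\mu r_h$ as $\Gamma[(1+t)^{d'}]$ for a $d'$-sequence $\Gamma$.'' After stripping the reciprocal-factorial factors via Lemmas~\ref{ms1} and~\ref{ns1}, the residual numerator is a cubic in $i$, so $\sum_i(\text{numerator})\binom{d'}{i}t^i$ is not $(1+t)^{d'}$ but $(1+t)^{d'-3}$ times a cubic in $t$ whose coefficients depend on $k=\lambda/\mu$, $m$, $h$, and $d$. One must then prove this cubic has positive discriminant for all real $k$ and all admissible integers $m,h,d$. The paper's three sub-lemmas each face only a two-parameter version of this and dispatch it with a computer-algebra \texttt{Resolve}; your three-parameter version is strictly harder and you have not supplied it. Note also that ``$\Gamma$ is a $d'$-sequence'' is, by Theorem~\ref{ch-n}, equivalent to the very real-rootedness you are trying to establish, so that phrasing is circular rather than a reduction. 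Your Wro\'nskian endgame is fine and matches what the paper does inside each sub-lemma.
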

Due to the computational difficulty, we will prove the following lemmas instead of proving Lemma \ref{h_m_int}.
\begin{lem}\label{m1}
For any positive integer $m$ and $d\geq 3$ , we have
$$\B(Q_{U_{m,d}}(t)) \ll \B(Q_{U_{1,d}}(t)-Q_{U_{1,d-1}}(t)).$$
\end{lem}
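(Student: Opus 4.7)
My strategy is to prove the interlacing using the Hermite--Kakeya--Obreschkoff theorem (Theorem~\ref{HKO}): I will show that every real linear combination
\[
H_{\lambda,\mu}(t):=\lambda\,\B(Q_{U_{m,d}}(t))+\mu\,\B(Q_{U_{1,d}}(t)-Q_{U_{1,d-1}}(t))
\]
has only real roots, and then determine the direction via a Wronskian computation at $t=0$. The first step is to bring both polynomials into an explicit coefficient form. Using the partial fraction identity $\frac{m(d-2i)}{(m+i)(m+d-i)}=\frac{m}{m+i}-\frac{m}{m+d-i}$ together with \eqref{Qmd}, we write $\B(Q_{U_{m,d}}(t))=F_m(t)-G_m(t)$, where $F_m,G_m$ are the two partial-fraction halves. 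A short telescoping using Pascal's identity gives $(Q_{U_{1,d}}-Q_{U_{1,d-1}})_i=\binom{d}{i}-\binom{d}{i-1}$, hence $\B(Q_{U_{1,d}}(t)-Q_{U_{1,d-1}}(t))=C(t)-D(t)$ with $C(t)=\sum_{i=0}^{n}\binom{n}{i}\binom{d}{i}t^i$ and $D(t)=\sum_{i=0}^{n}\binom{n}{i}\binom{d}{i-1}t^i$, where $n=\lfloor(d-1)/2\rfloor$.

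Next I establish real-rootedness of each of $F_m,G_m,C,D$ via the multiplier-sequence machinery of Section~2. The polynomial $C$ is the image of $(1+t)^n$ under the $d$-sequence $\{\binom{d}{i}\}_{i=0}^{d}$, which is such by Theorem~\ref{ch-n} (applied to the classical real-rooted polynomial $\sum\binom{d}{i}^2t^i$). For $F_m$, I apply the $d$-sequence $\{1/(d-i)!\}$ from Lemma~\ref{ns1} to $(1+t)^n$ to produce the real-rooted polynomial $\sum_i\binom{n}{i}/(d-i)!\cdot t^i = \sum_i i!\binom{n}{i}\binom{d}{i}t^i/d!$, and then apply the multiplier sequence $\{1/(i!(i+m))\}$ of Lemma~\ref{ms1} (case $l=k=m$) to obtain $F_m$ up to a positive scalar. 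The polynomial $G_m$ is handled symmetrically using the $d$-sequence $\{1/((d-i)!(d-i+m))\}$ of Lemma~\ref{ns1} acting on the Laguerre-type polynomial $d!\sum_i\binom{n}{i}t^i/i!$; and $D$ is real-rooted via the identity $\binom{n}{j+1}=\frac{n}{j+1}\binom{n-1}{j}$ followed by application of $\{1/(j+1)!\}$ to $\sum_j j!\binom{n-1}{j}\binom{d}{j}t^j$.

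The main obstacle is promoting the four individual real-rootedness results to real-rootedness of the \emph{difference} $H_{\lambda,\mu}=\lambda(F_m-G_m)+\mu(C-D)$, since Lemma~\ref{lem:convex} handles sums rather than differences of interlacing polynomials. My plan here is to exhibit a suitable common interlacing pattern among $F_m,G_m,C,D$—for example $F_m\ll D$ and $G_m\ll C$—by verifying the HKO hypotheses through explicit Wronskian and discriminant computations, invoking Lemma~\ref{disc} for the cubic and quadratic cases that arise when $n$ is small. Once alternating is secured for all $d\ge 3$, the direction of interlacing is fixed by the Wronskian at $t=0$: writing $f=\B(Q_{U_{1,d}}-Q_{U_{1,d-1}})$ and $g=\B(Q_{U_{m,d}})$, the inequality $W[f,g](0)=f_1 g_0-f_0 g_1\ge 0$ reduces, after using $\binom{m+d}{d}=\frac{m+d}{d}\binom{m+d-1}{m}$, to
\[
(d-1)(m+1)(m+d-1)\;\ge\;m(d-2)(m+d),
\]
which expands to $m^2+md+(d-1)^2\ge 0$, confirming $\B(Q_{U_{m,d}})\ll\B(Q_{U_{1,d}}-Q_{U_{1,d-1}})$.
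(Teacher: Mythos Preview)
Your framing via Hermite--Kakeya--Obreschkoff together with a Wronskian check at $t=0$ is exactly the paper's strategy, and your endgame is correct: the identity $(Q_{U_{1,d}}-Q_{U_{1,d-1}})_i=\binom{d}{i}-\binom{d}{i-1}$ is valid, and your inequality $m^2+md+(d-1)^2\ge 0$ is the same positivity the paper finds for the Wronskian constant term. The gap is in the middle step, and it is a real one.

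Splitting into four pieces $F_m,G_m,C,D$ and proving each real-rooted does not get you to the conclusion. You need $\lambda(F_m-G_m)+\mu(C-D)$ real-rooted for \emph{all} real $\lambda,\mu$, and no pattern of pairwise interlacings among the four pieces yields that. HKO is a statement about \emph{two} polynomials; Lemma~\ref{lem:convex} and common-interleaver arguments only control nonnegative combinations, whereas here the coefficients are signed and, worse, coupled (the coefficient on $F_m$ equals minus the coefficient on $G_m$). In particular, $F_m\ll D$ and $G_m\ll C$ say nothing about $\lambda F_m+\mu C-(\lambda G_m+\mu D)$. Your remark that discriminant checks will handle ``the cubic and quadratic cases that arise when $n$ is small'' also misdiagnoses where the difficulty lies: a priori $H_{\lambda,\mu}$ has degree $n=\lfloor(d-1)/2\rfloor$, and the four-way split gives no mechanism to reduce that.

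What the paper does instead is keep $H_{\lambda,\mu}$ as a single sum and observe that its $i$-th coefficient has the common shape
\[
\frac{k(d-2i)(d-i+1)+(d-2i+1)(i+m)(d-i+m)}{i!(i+m)\,(d-i+1)!\,(d-i+m)}\binom{n}{i},
\]
so that Lemmas~\ref{ms1} and~\ref{ns1} can be applied \emph{simultaneously} to strip the denominator. The remaining numerator is a polynomial of degree at most $3$ in $i$, hence $\sum_i(\text{numerator})\binom{n}{i}t^i$ factors as $(1+t)^{n-3}$ times a fixed cubic $p_1(t)$ for every $n\ge 3$; one then checks $\Delta(p_1)>0$ uniformly in $k,m,d$ via Lemma~\ref{disc} (with a separate quadratic $p_2$ when $m=1$). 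Your partial-fraction split destroys precisely this common denominator, and with it the $(1+t)^{n-3}$ factorization that makes the problem finite-dimensional. If you want to salvage your approach, recombine the pieces before applying the multiplier sequences rather than after.
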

\begin{proof}
 We first prove that $\B(Q_{U_{m,d}}(t))$ has only real roots.
From \eqref{Qmd}, we have
 \begin{align*}
 \B(Q_{U_{m,d}}(t)) =\frac{(m+d)!}{(m-1)!}\sum _{i=0}^{\floorfrac{d-1}{2} }
 \frac{(d-2i)}{i!(i+m)(d-i)!(d-i+m)} {\floorfrac{d-1}{2} \choose i } t^i.
 \end{align*}
Thus, the polynomial \begin{align*}
	\sum _{i=0}^{\floorfrac{d-1}{2} }
 (d-2i) \binom{\floorfrac{d-1}{2}}{i} t^i
 &=\sum _{i=0}^{\floorfrac{d-1}{2} }d \binom{\floorfrac{d-1}{2}}{i} t^i
 -\sum _{i=0}^{\floorfrac{d-1}{2} } 2i \binom{\floorfrac{d-1}{2}}{i} t^i\\
 &=d(1+t)^{\floorfrac{d-1}{2} }-2 t {\floorfrac{d-1}{2} }( 1+ t)^{\left\lfloor\frac {d - 1} {2} \right \rfloor - 1} \\
 &=(1+t)^{\left\lfloor\frac {d - 1} {2} \right \rfloor - 1} \left(d + d t -2t\left\lfloor\frac {d - 1} {2} \right \rfloor \right)
 \end{align*}
 has only negative roots.
By Lemmas \ref{ms1} and \ref{ns1} we obtain the desired result.
By the Hermite-Kakeya-Obreschkoff theorem, it remains to prove that the following polynomial has only real roots for any $k$.
By direct computation, we have
 \begin{align*}
 &\frac{k}{m \binom {d + m} {d}}\B(Q_{U_{m,d}}(t))+ \B(Q_{U_{1,d}}(t)-Q_{U_{1,d-1}}(t)) \\
 &=\sum _{i=0}^{\floorfrac{d-1}{2} }
 \left( k \frac{(d-2i)}{(m+i)(d-i+m)} + \frac {d - 2 i + 1} {d - i + 1} \right) \binom {d} {i} {\floorfrac{d-1}{2} \choose i } t^i
 \\
 &=d! \sum _{i=0}^{\floorfrac{d-1}{2} }
 \frac {k (d-2 i) (d-i+1)+ (d-2 i+1) (i+m) (d-i+m) } {i! (i + m) (d - i + 1)! (d - i + m)} {\floorfrac{d-1}{2} \choose i } t^i
 \end{align*}
 For any real number $k$ and positive integers $d\geq 7$ and $m\geq 2$, we will show that
 \[
 \sum _{j=0}^{\floorfrac{d-1}{2} } \left( k (d-2 i) (d-i+1)+ (d-2 i+1) (i+m) (d-i+m)\right) { \floorfrac{d-1}{2} \choose i }t^i .
 \]
 has only real roots.
 Let $n=\floorfrac{d-1}{2}$, then $n \geq 3$.
 It is clear that $d=2n+1$ if $d$ is odd and $d=2n+2$ if $d$ is even.
 By direct calculation, we have
 \begin{align*}
 &\sum_{i=0}^{n}
 \left( k (d-2 i)(d-i+1)+ (d-2 i+1) (i+m) (d-i+m) \right) {n \choose i } t^i =(t+1)^{n-3} p_1(t),
 \end{align*}
 where
 $ p_1(t)= d k + d^2 k + d m + d^2 m + m^2 +
 d m^2 + (3 d k + 3 d^2 k + 3 d m + 3 d^2 m + 3 m^2 + 3 d m^2 + n -
 2 d n + d^2 n - 3 d k n - 2 d m n - 2 m^2 n) t + (3 d k +
 3 d^2 k + 3 d m + 3 d^2 m + 3 m^2 + 3 d m^2 - 3 n - d n +
 2 d^2 n - 2 k n - 6 d k n - 4 d m n - 4 m^2 n + 5 n^2 - 3 d n^2 +
 2 k n^2) t^2 + (d k + d^2 k + d m + d^2 m + m^2 + d m^2 + d n +
 d^2 n - 2 k n - 3 d k n - 2 d m n - 2 m^2 n - n^2 - 3 d n^2 +
 2 k n^2 + 2 n^3) t^3.$
 Now we prove that the polynomial $p_1(t)$ has only real roots for both odd and even values of $d$ using \textbf{Mathematica}.
 \begin{mma}
 \In p1= |Collect|\left[ {\sum _ {i = 0}^
 n t^i\binom {n} {i} ( k (d - i + 1) (d - 2 i) + (d - 2 i + 1) (i + m) (d - i +
 m) )}\right./ (t + 1)^{n - 3},
 t, \left. \right];\\
 \In disc = |Discriminant|[p1, t];\\
 \In |Resolve|[|ForAll|[\{k, n, m, d\}, n\geq3 \&\& m \geq 2 \&\& (d == 2n + 1\|d==2n+2), disc > 0]]\\
 \Out True\\
 \end{mma}
 Using Lemma \ref{disc}, we know that $p(t)$ has only real roots. By using Lemmas~\ref{ms1} and~\ref{ns1} we can obtain the desired result.
 For the case $m=1$ and $d\geq 7$, we have
 \begin{align*}
 &\frac{k}{ \binom {d + 1} {d}}\B(Q_{U_{1,d}}(t))+ \B(Q_{U_{1,d}}(t)-Q_{U_{1,d-1}}(t)) \\
 &=d! \sum _{i=0}^{\floorfrac{d-1}{2} }
 \frac {k (d - 2 i) + (i + 1) (d - 2 i + 1)} {(i + 1)! (d - i + 1)!} {\floorfrac{d-1}{2} \choose i } t^i
 \end{align*}
 Similar to the case $m\geq2$, we only need to show that
 \begin{align*}
	&\sum_{i=0}^{n}
	\left( k (d-2 i)+ (d-2 i+1) (i+1) \right) {n \choose i } t^i =(t+1)^{n-2} p_2(t),
	\end{align*}
where $p_2(t)=1 + d + d k + (2 + 2 d + 2 d k - 3 n + d n - 2 k n) t + (1 + d + d k -	n + d n - 2 k n - 2 n^2) t^2$.
The discriminant of $p_2(t)$ is
 $ ((d n - 2 d + 3 n) + 2 k n)^2 + 4 (n - 1) (d (d - n) + 2 n),$
 which is obviously positive. Therefore, $p_2(t)$ is of degree 1 or 2, and in both cases, it has only real roots.
 If $d = 3$ or $d = 4$, the degrees of $\lambda \B(Q_{U_{m,d}}(t))$ and $\B(Q_{U_{1,d}}(t) - Q_{U_{1,d-1}}(t))$ are both 1.
For $d = 5$ or $d = 6$, their degrees are both 2. We can verify the desired result directly using Mathematica .
We have proven that for any real numbers $ \lambda$ and $\mu$, the polynomial $\lambda \B(Q_{U_{m,d}}(t)) +\mu \B(Q_{U_{1,d}}(t)-Q_{U_{1,d-1}}(t)) $ has only real roots. Thus the roots of $\B(Q_{U_{m,d}}(t))$ and $\B(Q_{U_{1,d}}(t)-Q_{U_{1,d-1}}(t)) $ are alternating. To finish the proof, we only need to show that
$$W[\B(Q_{U_{m,d}}(t)),\B(Q_{U_{1,d}}(t)-Q_{U_{1,d-1}}(t))]$$ is negative at $t=0$.
Recall that
$$Q_{U_{m,d}}(t)=\binom {d + m - 1} {m}+ \frac {(d - 2) m} {d + m - 1}\binom {d + m} {m + 1} t+\cdots.$$
Thus
\begin{align*}
&W[\B(Q_{U_{m,d}}(t)),\B(Q_{U_{1,d}}(t)-Q_{U_{1,d-1}}(t))]\\
&=(\B(Q_{U_{m,d}}(t)))^{'}\B(Q_{U_{1,d}}(t)-Q_{U_{1,d-1}}(t)) - \B(Q_{U_{m,d}}(t)) \B(Q_{U_{1,d}}(t)-Q_{U_{1,d-1}}(t))^{'}
\end{align*}
The constant term is
\begin{align*}
	&\binom{\floorfrac{d-1}{2}}{1}\frac {(d - 2) m} {d + m - 1}\binom {d + m} {m + 1}\cdot 1 - \binom{\floorfrac{d-1}{2}}{1} \binom {d + m - 1} {m} \cdot (d-1) \\
	&=-\frac {\left(d m + (d - 1)^2 + m^2 \right) (d + m - 2)!} {(d - 1)! (m + 1)!},
\end{align*}
which is negative.
We complete the proof.
 \end{proof}
\begin{lem}\label{h+1h}
For any positive integer $h$, we have
$ \B(Q_{U_{h+1,d}}(t)-Q_{U_{h+1,d-1}}(t)) \ll \B(Q_{U_{h,d}}(t)-Q_{U_{h,d-1}}(t)) $.
\end{lem}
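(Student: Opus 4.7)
The plan is to mimic the three-step approach used for Lemma~\ref{m1}. Set $F_h(t):=\B(Q_{U_{h,d}}(t)-Q_{U_{h,d-1}}(t))$; the goal is to prove $F_{h+1}(t)\ll F_h(t)$, which I would establish by (i) showing $F_h$ and $F_{h+1}$ are each real-rooted, (ii) showing every real linear combination $\lambda F_{h+1}+\mu F_h$ is real-rooted so as to apply the Hermite-Kakeya-Obreschkoff theorem (Theorem~\ref{HKO}), and (iii) fixing the direction of interlacing via a Wro\'nskian computation.

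For step (i), writing out $Q_{U_{h,d}}(t)-Q_{U_{h,d-1}}(t)$ from the closed form \eqref{Qmd} and then applying $\B$, the coefficient of $t^i$ in $F_h(t)$ takes the form
\[
\frac{P(d,h,i)}{i!\,(i+h)\,(d-i+h)\,(d-i)!}\binom{n}{i}
\]
(up to a global factor) for an explicit polynomial $P$ in $d,h,i$, where $n$ is the degree of $Q_{U_{h,d}}-Q_{U_{h,d-1}}$. Peeling off the factorial prefactor via Lemmas~\ref{ms1} and~\ref{ns1}, the real-rootedness of $F_h$ reduces to that of the simpler polynomial $\sum_i P(d,h,i)\binom{n}{i}t^i$, which I expect to factor, after dividing by a suitable power of $(1+t)$, as a polynomial of small degree in $t$ with visibly real (and negative) roots, exactly as happened in the proof of Lemma~\ref{m1}.

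For step (ii), by Theorem~\ref{HKO} it is enough to show that $kF_{h+1}(t)+F_h(t)$ is real-rooted for every real $k$. After the same multiplier-sequence extraction, this amounts to showing that
\[
\sum_{i=0}^{n}\bigl(k\cdot A(d,h,i)+B(d,h,i)\bigr)\binom{n}{i}t^i
\]
has only real roots for explicit polynomial functions $A$ and $B$. I anticipate the standard factorization $(1+t)^{n-3}q_h(t)$ for an explicit cubic $q_h$ with coefficients polynomial in $k,d,h,n$; Lemma~\ref{disc} then reduces the task to proving positivity of the discriminant of $q_h$ for all $k\in\mathbb{R}$ and all admissible integer parameters with $n\geq 3$ and $d\in\{2n+1,2n+2\}$. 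I would verify this quantified polynomial inequality with Mathematica exactly as in the proof of Lemma~\ref{m1}, treating the low-degree cases $d\in\{3,4,5,6\}$ separately by direct computation. This discriminant verification is the main obstacle of the argument, since no conceptual simplification over the Lemma~\ref{m1} calculation is apparent and the parameter $h$ now appears symmetrically on both sides.

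Step (iii) is a direct calculation. I would compute
\[
W[F_{h+1},F_h](0)=F_{h+1}'(0)\,F_h(0)-F_{h+1}(0)\,F_h'(0)
\]
using the first two coefficients of $Q_{U_{h,d}}(t)$ already recorded in the proof of Lemma~\ref{m1}, obtaining an explicit rational function of $d$ and $h$ which I expect to be strictly negative for all admissible $d$ and $h$. Negativity of the Wro\'nskian at $t=0$ then upgrades alternation to the interlacing relation $F_{h+1}(t)\ll F_h(t)$, finishing the proof.
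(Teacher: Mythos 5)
Your proposal follows essentially the same route as the paper's proof: after pulling out a multiplier/$n$-sequence prefactor via Lemmas~\ref{ms1} and~\ref{ns1}, you form the one-parameter combination $kF_{h+1}+F_h$, factor out $(1+t)^{n-3}$, verify positivity of the cubic discriminant with Mathematica (treating $h=1$ separately, where the quotient degenerates to a quadratic after dividing by $(1+t)^{n-2}$), and fix the direction of interlacing by checking the sign of the Wro\'nskian constant term. One minor remark: your step (i) is redundant, since real-rootedness of $F_{h+1}$ and $F_h$ already follows from real-rootedness of $kF_{h+1}+F_h$ for all $k$ by taking $k=0$ and $k\to\pm\infty$ (Hurwitz), and your guessed denominator $i!\,(i+h)\,(d-i+h)\,(d-i)!$ is not quite the one that actually appears (it is $i!\,(d-i)!\,(d-i+h-1)(d-i+h)$ up to a global factor), but you flagged this would need explicit computation, so this is not a gap in the argument.
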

\begin{proof}
We have
 \begin{align}
 & k \B(Q_{U_{h+1,d}}(t)-Q_{U_{h+1,d-1}}(t))+ \B(Q_{U_{h,d}}(t)-Q_{U_{h,d-1}}(t)) \notag \\
 &=\sum _{i=0}^{\floorfrac{d-1}{2} }
 \left( k \frac{((d - 2 i) (d + h +1- i) + i )(d+h)!}{( d + h - i) (d + h+1 - i) i!(d-i)!h! } +\right.\notag\\
 &\qquad\qquad\qquad \left. \frac{((d - 2 i) (d + h - i) + i )(d+h-1)!}{(-1 + d + h - i) (d + h - i) i!(d-i)!(h-1)! } \right) {\floorfrac{d-1}{2} \choose i } t^i \notag
 \\
 &= \frac{(d + h -1)!}{h! }\sum _{i=0}^{\floorfrac{d-1}{2} }
 { \frac{ p_3}{i! (d - i)! (d - i +h - 1) (d - i + h) (d- i + h + 1)} } {\floorfrac{d-1}{2} \choose i }t^i, \label{hh1}
 \end{align}
 where $p_3=k (d + h) (d + h - i - 1) ((d -2i)(d + h + 1 - i) + i ) +
 h (d + h - i + 1)\left((d - 2 i) (d + h - i) + i \right) .$
 Similar to the proof of Lemma \ref{m1}, we first prove that
 \[
 \sum _{i=0}^{\floorfrac{d-1}{2} }
 p_3 {\floorfrac{d-1}{2} \choose i }t^i
 \]
 has only real roots for $h\geq2$, which follows from the following lines.
 \begin{mma}
 \In p3= |Collect|\left[ \sum _ {i = 0}^
 n t^i\binom {n} {i} ( k (d + h) (d + h - i - 1) ((d -2i)(d + h + 1 - i) + i )\right. \linebreak +
 h (d + h - i + 1)\left.(d - 2 i) (d + h - i) + i )\right) / (t + 1)^{n - 3},
 t, \left. \right];\\
 \In disc = |Discriminant|[p3, t];\\
 \In |Resolve|[|ForAll|[\{k, n, h, d\}, n\geq3 \&\& h \geq 2 \&\& (d == 2n + 1\|d==2n+2), disc > 0]]\\
 \Out True\\
 \end{mma}
 For the case $h=1$,
 the right-hand side of \eqref{hh1} is
 $$d! \sum _{i=0}^{\floorfrac{d-1}{2} }
 { \frac{(d+1) k ((d-2 i) (d-i+2)+i)+(d-2 i+1) (d-i+2)}{i! (d- i + 2)!} } {\floorfrac{d-1}{2} \choose i }t^i .$$
 Its real-rootedness follows from the following lines.
 \begin{mma}
	\In p4= |Collect|\left[ \sum _ {i = 0}^
	n t^i\binom {n} {i} ( (d+1) k ((d-2 i) (d-i+2)+i)+(d-2 i+1) \right. \linebreak \cdot (d-i+2)
	 \left. \right) / (t + 1)^{n - 2},
	t, \left. \right];\\
	\In disc = |Discriminant|[p4, t];\\
	\In |Resolve|[|ForAll|[\{k, n, d\}, n\geq3 \&\& (d == 2n + 1\|d==2n+2), disc > 0]]\\
	\Out True\\
	\end{mma}
\noindent Thus, the roots of $\B(Q_{U_{h+1,d}}(t)-Q_{U_{h+1,d-1}}(t))$ and $\B(Q_{U_{h,d}}(t)-Q_{U_{h,d-1}}(t))$ are interlacing.
Similar to the proof of Lemma~\ref{m1}, to finish the proof, we need to show that the Wronskian of $\B(Q_{U_{h+1,d}}(t)-Q_{U_{h+1,d-1}}(t))$ and $\B(Q_{U_{h,d}}(t)-Q_{U_{h,d-1}}(t))$ is negative at $t=0$.
Recall that
	$$Q_{U_{m,d}}(t)=\binom {d + m - 1} {m}+ \frac {(d - 2) m} {d + m - 1}\binom {d + m} {m + 1} t+\cdots,$$
	and
	$$Q_{U_{m,d}}(t)-Q_{U_{m,d-1}}(t)=\binom {d + m - 2} {m - 1}+
	\frac {\left(d^2 + d (m - 3) - 2 m + 3 \right) (d + m - 3)!} {(d - 1)! (m - 1)!} t+\cdots.$$
	The constant term of $W[\B(Q_{U_{h+1,d}}(t)-Q_{U_{h+1,d-1}}(t)),\B(Q_{U_{h,d}}(t)-Q_{U_{h,d-1}}(t))]$ is
	\begin{align*}
	&\binom{\floorfrac{d-1}{2}}{1} \frac {\left(d^2 + d (h - 2) - 2 h+1\right) (d + h - 2)!} {(d - 1)! h!} \cdot \binom {d + h - 2} {h - 1} \\[6pt]
		&\qquad - \binom{\floorfrac{d-1}{2}}{1} \binom {d + h - 1} {h} \cdot \frac {\left(d^2 + d (h - 3) - 2 h + 3 \right) (d + h - 3)!} {(d - 1)! (h - 1)!} \\[6pt]
		&=- \binom{\floorfrac{d-1}{2}}{1} \frac{1}{d+h-1}{\binom{d+h-3}{h-1} \binom{d+h-1}{h}},
	\end{align*}
	which is negative.
	We complete the proof.
\end{proof}
\begin{lem}
For any positive integer $m\geq2$, we have
$$\B(Q_{U_{m,d}}(t)) \ll \B(Q_{U_{m,d}}(t)-Q_{U_{m,d-1}}(t)).$$
\end{lem}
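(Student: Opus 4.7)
My plan is to follow the two-step template established in the proofs of Lemma \ref{m1} and Lemma \ref{h+1h}. By the Hermite-Kakeya-Obreschkoff theorem (Theorem \ref{HKO}), to prove that the roots of $\B(Q_{U_{m,d}}(t))$ and $\B(Q_{U_{m,d}}(t)-Q_{U_{m,d-1}}(t))$ are alternating, it suffices to show that for every real $k$ the polynomial
$$k\,\B(Q_{U_{m,d}}(t)) + \B(Q_{U_{m,d}}(t)-Q_{U_{m,d-1}}(t))$$
has only real roots, and then to determine the direction of the interlacing by checking the sign of the Wronskian of these two polynomials at $t=0$.

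For the first step, I would use the explicit formula \eqref{Qmd} together with the coefficient expression for $Q_{U_{m,d}}(t)-Q_{U_{m,d-1}}(t)$ derived in the proof of Lemma \ref{h+1h} to combine the two sums over a common denominator. After stripping off the factorial factors via Lemmas \ref{ms1} and \ref{ns1}, the real-rootedness reduces to that of a polynomial of the form
$$\sum_{i=0}^{n} q(i,k,m,d)\binom{n}{i}t^i,$$
where $n=\lfloor (d-1)/2\rfloor$ and $q(i,k,m,d)$ is a polynomial in $i$ whose precise form comes out of the common-denominator computation. As in the earlier lemmas, I expect that this sum, divided by $(1+t)^{n-3}$, becomes a cubic polynomial in $t$ whose discriminant is strictly positive for $n\geq 3$, $m\geq 2$, and $d\in\{2n+1,2n+2\}$; this would be verified in Mathematica exactly as in the previous two proofs, after which Lemma \ref{disc} yields the desired real-rootedness. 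The small-degree cases $d\leq 6$, where $n\leq 2$, can be verified directly since the resulting polynomials have degree at most two.

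For the second step, I would use the initial terms
$$Q_{U_{m,d}}(t) = \binom{d+m-1}{m} + \frac{(d-2)m}{d+m-1}\binom{d+m}{m+1}t + \cdots,$$
$$Q_{U_{m,d}}(t) - Q_{U_{m,d-1}}(t) = \binom{d+m-2}{m-1} + \frac{(d^2+d(m-3)-2m+3)(d+m-3)!}{(d-1)!(m-1)!}t + \cdots,$$
to compute the constant term of $W[\B(Q_{U_{m,d}}(t)),\B(Q_{U_{m,d}}(t)-Q_{U_{m,d-1}}(t))]$. This reduces to a difference of two products of binomial and factorial expressions, each multiplied by the factor $\binom{\lfloor (d-1)/2\rfloor}{1}$, which I expect to simplify to a single manifestly negative expression along the lines of those appearing at the end of the proofs of Lemma \ref{m1} and Lemma \ref{h+1h}.

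The main obstacle I anticipate is the last algebraic simplification: certifying that the Wronskian at $t=0$ is negative requires collapsing the difference of two binomial-type products into a single negative quantity, and the sign must hold uniformly for all $m\geq 2$ and $d\geq 3$. The discriminant verification in Step 1 should be a direct adaptation of the previous Mathematica blocks and is not expected to pose a conceptual difficulty, only computational bookkeeping.
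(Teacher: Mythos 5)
Your proposal is correct and takes essentially the same approach as the paper: apply the Hermite--Kakeya--Obreschkoff theorem by reducing the real-rootedness of $k\,\B(Q_{U_{m,d}})+\B(Q_{U_{m,d}}-Q_{U_{m,d-1}})$ to a cubic (after dividing by $(1+t)^{n-3}$) whose discriminant positivity is certified in Mathematica, then fix the direction of interlacing by checking that the Wronskian's constant term is negative using the leading two coefficients of $Q_{U_{m,d}}$ and $Q_{U_{m,d}}-Q_{U_{m,d-1}}$. Your explicit note that the small-degree cases $d\le 6$ need separate verification is a detail the paper omits for this lemma but is otherwise consistent with the argument used in Lemma~\ref{m1}.
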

\begin{proof}
 Through a direct computation, we obtain
 \begin{align*}
 &\B(Q_{U_{m,d}}(t)(t))+ k \B(Q_{U_{m,d}}(t)-Q_{U_{m,d-1}}(t)) \\
 &=\B( (k+1) Q_{U_{m,d}}(t)(t)-k Q_{U_{m,d-1}}(t)) \\
 &= \frac{(m-1+d)!}{(m-1)! }\sum _{i=0}^{\floorfrac{d-1}{2} }
 { \frac{ p_5}{i! (i + m) (d - i)! ( d - i + m-1) (d - i + m) } } {\floorfrac{d-1}{2} \choose i }t^i
 \end{align*}
 where $p_3=(k + 1) (d - 2 i) (d + m) (d - i + m - 1) - k (d - 2 i - 1) (d - i) (d - i + m).$
 Similar to the proof of Lemma, we first prove that
 \[
 \sum _{i=0}^{\floorfrac{d-1}{2} }
 p_5 {\floorfrac{d-1}{2} \choose i }t^i
 \]
 has only real roots, which follows from the following lines.
 \begin{mma}
 \In p5= |Collect|\left[ \sum _ {i = 0}^
 n t^i\binom {n} {i} ( (k + 1) (d - 2 i) (d + m) (d - i + m - 1) - k (d - 2 i - 1) \right. \linebreak (d - i) (d - i + m) )/ (t + 1)^{n - 3},
 t \left. \right];\\
 \In disc = |Discriminant|[p5, t];\\
 \In |Resolve|[|ForAll|[\{k, n, m, d\}, n\geq3 \&\& m \geq 1 \&\& (d == 2n + 1\|d==2n+2), disc > 0]]\\
 \Out True\\
 \end{mma}
 Recall that
 \begin{align*}
 Q_{U_{m,d}}(t) & =\binom {d + m - 1} {m}+ \frac {(d - 2) m} {d + m - 1}\binom {d + m} {m + 1} t+\cdots.\\
 Q_{U_{m,d}}(t)-Q_{U_{m,d-1}}(t)&=\binom {d + m - 2} {m - 1}+
 \frac {\left(d^2 + d (m - 3) - 2 m + 3 \right) (d + m - 3)!} {(d - 1)! (m - 1)!} t+\cdots.
\end{align*}
 The constant term of $W[\B(Q_{U_{m,d}}(t)),\B(Q_{U_{m,d}}(t)-Q_{U_{m,d-1}}(t))]$ is
 \begin{align*}
	 &\binom{\floorfrac{d-1}{2}}{1} \frac {(d - 2) m} {d + m - 1}\binom {d + m} {m + 1} \cdot \binom {d + m - 2} {m - 1} \\
	 &\qquad - \binom{\floorfrac{d-1}{2}}{1} \binom {d + m - 1} {m} \cdot \frac {\left(d^2 + d (m - 3) - 2 m + 3 \right) (d + m - 3)!} {(d - 1)! (m - 1)!} \\
	 &=-\frac {\left( m^2 +(d - 1) (d + 2 m - 2) + 1 \right)} {(d + m - 1) (d + m)} \binom {d+m - 3} {m - 1}\binom {d + m} {m + 1},
 \end{align*}
 which is negative.
\end{proof}
By combining the above three lemmas and Lemma~\ref{WeakTransitivity}, we prove Lemma~\ref{h_m_int}.
\begin{proof}[Proof of Lemma \ref{h_m_int}]
Let $f_{0}=\B(Q_{U_{m,d}}(t))$ and \vspace{-2mm}
$$f_h=\B(Q_{U_{m+1-h,d}}(t)-Q_{U_{m+1-h,d-1}}(t)) \text{\quad for\quad} 1\leq h\leq m.$$
It follows from Lemmas \ref{h_m_int}, \ref{m1}, and \ref{h+1h} that
$f_0 \ll f_{m}$ and $f_{i-1} \ll f_i$ for all $1 \leq i \leq m$. Thus the polynomial sequence $(f_0, f_1, \ldots, f_m)$ is interlacing by Lemma \ref{WeakTransitivity}.
In particular, we have $f_0 \ll f_h$ for any $ 1 \leq h \leq m$, i.e.,
$$\B(Q_{U_{m,d}}(t)) \ll \B (Q_{U_{h,d}}(t)-Q_{U_{h,d-1}}(t)) \text{\quad for \quad} 1\leq h\leq m.$$
We complete the proof.
\end{proof}
Although we do not know the specific range of $\lambda_h$, this does not affect us to prove Theorem \ref{paving-rz} and \ref{paving-log}.
\begin{proof}[Proof of Theorem \ref{paving-rz} and \ref{paving-log} ]
By Lemma~\ref{lem:convex},
we have $$\B(Q_{U_{m,d}}(t)) \ll \sum_{h=1}^{m} \lambda_h\cdot \B(Q_{U_{h,d}}(t)-Q_{U_{h,d-1}}(t))$$
for any nonnegative real numbers $\lambda_h$.
Thus, the polynomial $$\B(Q_{U_{m,d}}(t))-\sum_{h=1}^{m} \lambda_h\cdot \B(Q_{U_{h,d}}(t)-Q_{U_{h,d-1}}(t))$$ has only real roots for any nonnegative real numbers $\lambda_h$.
Suppose that $M$ is a paving matroid.
By \cite[Theorem 1.5]{braden2020singular}, we know that all the coefficients of $Q_M(t)$ are nonnegative.
By Theorem \ref{ch-n}, we conclude that the coefficients of $Q_M(t)$ forms a $\floorfrac{d-1}{2} $-sequence.
By definition, the degree of $Q_M(t)$ is less than or equal to $\floorfrac{d-1}{2}$. Again, by Theorem \ref{ch-n}, we know that
$\B(Q_M(t))$ has only real roots. This completes the proof of Theorem \ref{paving-rz}.
Theorem \ref{paving-log} follows from the well-known Newton's inequalities.
\end{proof}
It should be noted that although the degree of $Q_{U_{m,d}}(t)$ and $Q_{U_{h,d}}(t)-Q_{U_{h,d-1}}(t)$ for any $1\le d \le m$ is~${\lfloor (d-1) /2\rfloor}$,  the degree of $\B(Q_M(t))$ may be less than~${\lfloor (d-1) /2\rfloor}$.
In this sense,  the polynomial $\B(Q_M(t))$ may not equal $$\B(Q_{U_{m,d}}(t))-\sum_{h=1}^{m} \lambda_h\cdot \B(Q_{U_{h,d}}(t)-Q_{U_{h,d-1}}(t)).$$
This is why we adopt an indirect argument in the second paragraph of our proof of Theorem \ref{paving-rz}.
\section{Concluding remarks}
Based on the log-concavity properties established in this paper,  we shall study the degrees of the inverse Kazhdan-Lusztig polynomials and propose several intriguing conjectures.
 A matroid $M$ is said to be \emph{non-degenerate} if its Kazhdan-Lusztig polynomial has degree ${\lfloor(\mbox{rk}(M)-1)/2\rfloor}$, the maximal possible degree, see \cite{gedeon2017survey, vecchi2021matroid}.
Gedeon, Proudfoot, and Young~\cite{gedeon2017survey} conjectured that there are interlacing properties for the roots of the Kazhdan-Lusztig polynomial of a non-degenerate matroid $M$ and those of a contraction $M/{e}$ by one element.
However, similar properties do not hold for $\B(Q_M(t))$.
Nevertheless, one can  still investigate the relationship between the degrees of $P_M(t)$ and $Q_M(t)$.
Vecchi~\cite{vecchi2021matroid} obtained the following result for a matroid of odd rank and pointed out that this is not true when $M$ is of even rank.
 \begin{thm}[\cite{vecchi2021matroid}]\label{odd}
	Let $M$ be a matroid of odd rank $r$. Then $$\left[t^{\left\lfloor \frac{r-1}{2}\right\rfloor}\right]P_M(t) = \left[t^{\left\lfloor \frac{r-1}{2}\right\rfloor}\right]Q_M(t).$$
	In particular, a matroid $M$ of odd rank $r$ is non-degenerate if and only if $Q_M(t)$ has degree $\left\lfloor\frac{r-1}{2}\right\rfloor$.
\end{thm}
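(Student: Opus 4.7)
The plan is to derive $[t^{k}]P_M(t) = [t^{k}]Q_M(t)$ (with $r=2k+1$) from the recursive identity that defines $Q_M(t)$, together with the universal degree bound $\deg P_{M'}(t), \deg Q_{M'}(t) \le \lfloor (\rk(M')-1)/2\rfloor$ that holds for every matroid $M'$. First I would write down the mutual inversion relation between $P$ and $Q$ in the form
$$\sum_{F} (-1)^{\rk(F)}\, P_{M/F}(t)\, Q_{M|F}(t) \;=\; 0,$$
where the sum ranges over all flats $F$ of $M$ and $M$ is assumed nontrivial; this is the defining recursion of $Q_M(t)$ used in Gao--Xie, and is equivalent to the inversion identity appearing in Braden--Huh--Matherne--Proudfoot--Wang.

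The heart of the argument is a degree count in this identity. For a flat $F$ of rank $s$, the product $P_{M/F}(t)\,Q_{M|F}(t)$ has degree at most
$$\left\lfloor \frac{r-s-1}{2}\right\rfloor + \left\lfloor \frac{s-1}{2}\right\rfloor.$$
A direct case split on the parity of $s$ shows that when $r=2k+1$ is odd and $0<s<r$, this quantity equals $k-1$, hence is strictly less than $k$. Consequently no proper nonempty flat can contribute to the coefficient of $t^{k}$ in the inversion identity, and only the two extremes $F=\hat{0}$ and $F=\hat{1}$ survive. Substituting $P_{M/\hat{0}}=P_M$, $Q_{M|\hat{0}}=1$, $P_{M/\hat{1}}=1$, $Q_{M|\hat{1}}=Q_M$, together with the signs $(-1)^{0}=+1$ and $(-1)^{r}=-1$, the degree-$k$ coefficient of the identity collapses to
$$[t^{k}] P_M(t) \;-\; [t^{k}] Q_M(t) = 0,$$
which is the first assertion.

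For the ``in particular'' statement, two applications of the universal degree bound finish the argument. Since $\deg Q_M(t)\le k$, the condition $\deg Q_M(t) = k$ is equivalent to $[t^{k}] Q_M(t) \ne 0$; by the first part this is equivalent to $[t^{k}] P_M(t) \ne 0$; and since $\deg P_M(t)\le k$, that in turn is equivalent to $M$ being non-degenerate.

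The main obstacle I expect is pinning down the precise signed form of the inversion identity and correctly tracking the signs at the two extreme flats in the last step of the degree count; the parity arithmetic in the middle, and the non-degeneracy equivalence at the end, are both routine once the inversion identity is in hand.
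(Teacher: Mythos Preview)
Your argument is correct, but note that the paper does \emph{not} contain its own proof of this statement: Theorem~\ref{odd} is simply quoted from Vecchi~\cite{vecchi2021matroid} and used as background for Conjecture~\ref{same_deg}. So there is no in-paper proof to compare against.

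That said, the proof you outline is exactly the standard one (and essentially Vecchi's): take the inversion identity
\[
\sum_{F\in L(M)} (-1)^{\rk(F)}\, Q_{M|F}(t)\, P_{M/F}(t)\;=\;0
\]
for $\rk(M)>0$, check via the parity split that every intermediate flat contributes in degree at most $k-1$ when $r=2k+1$, and read off the degree-$k$ coefficient from the two extreme terms. Your sign tracking at $F=\hat 0$ and $F=\hat 1$ is right, and the deduction of the non-degeneracy equivalence from the universal degree bound is immediate. There is no gap.
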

Theorem \ref{odd} inspires us to propose the following related conjecture.
\begin{conj}\label{same_deg}
For any matroid $M$, the degrees of $P_M(t)$ and $Q_M(t)$ are the same.
\end{conj}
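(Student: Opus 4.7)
The plan is to attempt Conjecture~\ref{same_deg} by induction on the rank $r = \rk(M)$, leveraging the inversion relation that defines $Q_M(t)$ in terms of the family $\{P_{M'}(t)\}$ via inversion over the lattice of flats—the formalism in which Gao and Xie~\cite{gao2021inverse} introduced $Q_M$. The conjecture is substantially stronger than Theorem~\ref{odd}: Vecchi's result equates the top coefficients in odd rank only, whereas Conjecture~\ref{same_deg} additionally requires $\deg P_M(t) = \deg Q_M(t)$ in the sub-maximal regime, where both polynomials fall strictly below $\floorfrac{r-1}{2}$.

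The base cases are encouraging. For uniform matroids $U_{m,d}$, the explicit formula~\eqref{Qmd} gives $\deg Q_{U_{m,d}}(t) = \floorfrac{d-1}{2}$, matching the known degree of $P_{U_{m,d}}(t)$. For paving matroids of odd rank, one can combine Theorem~\ref{paving_coeff} with Theorem~\ref{odd} to verify the conjecture on a broad class and supply useful test cases. For the inductive step, assume the conjecture in rank less than $r$ and expand $Q_M(t)$ via the inversion formula as a signed sum of products involving $P_{M_F}(t)$ over proper flats $F$; each proper localization has rank strictly smaller than $r$, so by induction $\deg P_{M_F} = \deg Q_{M_F}$. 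Using nonnegativity of the coefficients of both polynomials~\cite{braden2020singular}, one then hopes to match the top nonzero coefficient of $Q_M(t)$ with a positive combination of top coefficients on the $P$ side, forcing synchronized vanishing.

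The main obstacle is the even rank case, where Vecchi has exhibited examples in which the top coefficients of $P_M(t)$ and $Q_M(t)$ differ as real numbers, so Theorem~\ref{odd} is unavailable. A further difficulty is the cascade phenomenon: once the top coefficient vanishes, the inversion identity for the next coefficient involves cancellations that are not manifestly positive, and the induction may fail to close. A natural intermediate step is to establish the one-sided bound $\deg Q_M(t) \ge \deg P_M(t)$ (or its converse) before attempting equality; alternatively, one could first verify the conjecture for sparse paving matroids via Theorem~\ref{paving_coeff} as a testing ground, refining the positivity techniques already developed in the present paper.
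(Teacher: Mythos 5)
What you were asked to address is labeled a \emph{conjecture} in the paper, not a theorem: the authors give no proof of Conjecture~\ref{same_deg}, reporting only that they have verified it computationally for all matroids on at most $9$ elements and for cycle matroids of graphs on at most $9$ vertices, and they then use it as heuristic support for Proposition~\ref{paving_positive}. There is therefore no paper proof to compare your attempt against, and you should be explicit that the statement is open rather than framing your outline as a proof.

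As a sketch of strategy your proposal is reasonable, and you correctly identify where it breaks, but it is worth naming the circularity precisely. Expanding $Q_M(t)$ by M\"obius inversion over proper flats expresses each coefficient, in particular the top one, as an \emph{alternating} sum of products of coefficients of the $P_{M_F}(t)$. Nonnegativity of those coefficients from~\cite{braden2020singular} controls individual summands but says nothing about cancellation in the signed aggregate, so there is no way, from positivity alone, to conclude that the top coefficients of $P_M(t)$ and $Q_M(t)$ vanish together; that ``synchronized vanishing'' is exactly what the conjecture asserts, so invoking it at the inductive step begs the question. Your even-rank concern is equally real: Theorem~\ref{odd} fails for even rank, so even the anchor that makes the odd-rank case tractable is missing. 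Your suggested one-sided bound $\deg Q_M(t)\ge \deg P_M(t)$ (or its reverse) is a sensible intermediate target, and testing on sparse paving matroids via Theorem~\ref{paving_coeff} is a good idea, but neither is carried out here. Until some new sign-definite structure is found that survives the inversion (for instance a closed formula isolating the leading coefficient of $Q_M(t)$, or a deletion–contraction monotonicity statement), the induction does not close and the conjecture remains open.
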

We verified Conjecture~\ref{same_deg} for all matroids on at most 9 elements, as well as for the cycle matroids of graphs with at most 9 vertices.
Braden, Braden, Huh, Matherne, Proudfoot, and Wang~\cite{braden2020singular} proved that the coefficients of both $P_M(t)$ and $Q_M(t)$ are nonnegative.
But are the coefficients strictly positive?  We provide
  the following result.
\begin{prop}\label{paving_positive}
	For any paving matroid \( M \), each coefficient of \( Q_M(t) \) is positive, except possibly the coefficient of \( t^{\lfloor \frac{\text{rk}(M)-1}{2} \rfloor} \), which may be zero.
 \end{prop}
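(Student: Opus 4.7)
The plan is to combine Theorem~\ref{paving-log} with the explicit formula of Theorem~\ref{paving_coeff}. By Theorem~\ref{paving-log}, the coefficient sequence $(a_0,\dots,a_n)$ of $Q_M(t)$, with $n=\lfloor(\rk(M)-1)/2\rfloor$, is log-concave with no internal zeros, so its nonzero entries form a consecutive block of indices contained in $[0,n]$. The proposition is therefore equivalent to showing that this block has left endpoint $0$ and (if $n\ge 2$) right endpoint at least $n-1$, namely $a_0>0$ and $a_{n-1}>0$; the cases $n\le 1$ are handled by the constant-term step alone.

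For the constant term, I would evaluate the formula in Theorem~\ref{paving_coeff} at $t=0$ using the identities $Q_{U_{m,d}}(0)=\binom{m+d-1}{d-1}$ and $Q_{U_{h,d}}(0)-Q_{U_{h,d-1}}(0)=\binom{h+d-2}{d-1}$, both read off from \eqref{Qmd}, to obtain
\[
a_0=Q_M(0)=\binom{m+d-1}{d-1}-\sum_{h=1}^{m}\lambda_h\binom{h+d-2}{d-1}.
\]
I would then identify this quantity with the unsigned M\"obius invariant $|\mu_M(\hat 0,\hat 1)|$ of the lattice of flats of $M$. The identity $Q_M(0)=|\mu_M(\hat 0,\hat 1)|$ can be checked directly for uniform matroids via \eqref{Qmd}, and extended to paving matroids through Theorem~\ref{paving_coeff} together with the standard M\"obius formula for paving matroids. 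Since a paving matroid of rank at least two is loopless, its geometric lattice satisfies $\mu_M(\hat 0,\hat 1)\neq 0$, giving $a_0>0$.

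For $a_{n-1}=[t^{n-1}]Q_M(t)$, I would extract this coefficient directly from Theorem~\ref{paving_coeff}, treating the parities of $d$ separately because $\deg Q_{U_{h,d-1}}$ equals $n-1$ when $d$ is odd but equals $n$ when $d$ is even. The positive contribution $[t^{n-1}]Q_{U_{m,d}}$ is explicit from \eqref{Qmd}, and the negative correction $\sum_{h}\lambda_h\bigl([t^{n-1}]Q_{U_{h,d}}-[t^{n-1}]Q_{U_{h,d-1}}\bigr)$ must be bounded above using the combinatorial constraint $\sum_{h}\lambda_h\binom{h+d-1}{d}\le\binom{m+d}{d}$, which counts the $d$-subsets of $M$ contained in some stressed hyperplane. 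The main obstacle lies in this step: unlike $a_0$, the coefficient $a_{n-1}$ has no immediate M\"obius-theoretic interpretation, so a delicate termwise comparison is required, especially in the case where $d$ is even and the leading terms $[t^n]Q_{U_{h,d}}-[t^n]Q_{U_{h,d-1}}$ partially cancel, to rule out the possibility that $\deg Q_M(t)$ drops below $n-1$.
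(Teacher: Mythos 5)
Your reduction is sound and matches the paper's in spirit: by Theorem~\ref{paving-log} the coefficient sequence of $Q_M(t)$ is log-concave with no internal zeros, so the nonzero coefficients occupy a consecutive block, and it suffices to prove $a_0>0$ and, when $n=\lfloor(\rk M-1)/2\rfloor\ge 2$, that $a_{n-1}>0$. Your constant-term argument is also in agreement with the paper, which cites the identification of $Q_M(0)$ with the absolute value of the constant term of the characteristic polynomial (i.e.\ $|\mu_M(\hat 0,\hat 1)|$) from Vecchi and Zaslavsky and notes it is positive; your plan to re-derive this identity from \eqref{Qmd} and Theorem~\ref{paving_coeff} is harmless but unnecessary.

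The genuine gap is exactly the one you flag yourself: the step $a_{n-1}>0$. Your plan to extract $[t^{n-1}]$ termwise from Theorem~\ref{paving_coeff} and bound the subtracted sum using $\sum_h\lambda_h\binom{h+d-1}{d}\le\binom{m+d}{d}$ is left incomplete, you give no derivation of that inequality, and you concede that the even-$d$ case (where the top-degree terms of $Q_{U_{h,d}}$ and $Q_{U_{h,d-1}}$ partially cancel) is problematic. The paper avoids any such coefficient estimate entirely by recycling what was already proved for Theorem~\ref{paving-rz}: Lemma~\ref{h_m_int} with Lemma~\ref{lem:convex} yields $\B(Q_{U_{m,d}}(t))\ll\sum_h\lambda_h\,\B\bigl(Q_{U_{h,d}}(t)-Q_{U_{h,d-1}}(t)\bigr)$, and both of these polynomials have degree $n$. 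For real-rooted $f\ll g$ of the same degree $n$ that are not proportional, any nonzero real combination $\lambda f+\mu g$ has degree at least $n-1$: if the top two coefficients both vanished, then after normalizing to monic one would have $[t^{n-1}]f=[t^{n-1}]g$, forcing equal root sums, which combined with the interlacing inequalities forces $f=g$. Taking $\lambda=1$, $\mu=-1$ shows the coefficient vector $\bigl(\binom{n}{i}a_i\bigr)_i$ is nonzero at $i=n$ or $i=n-1$; together with $a_0>0$ and no internal zeros, this gives $a_{n-1}>0$. To complete your proposal you should replace the direct coefficient comparison with this interlacing-degree argument.
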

\begin{proof}
The linear combination of two real-rooted polynomials $f(t)$ and $g(t)$, each of degree $n$, is given by $a f(t) + b g(t)$. The degree of this combination is either $n$ or $n-1$, provided that $a^2 + b^2 \neq 0$.
Then, for any paving matroid $M$, the degree of $\B(Q_M(t)) $ is at least ${\lfloor(rk(M)-1)/2-1\rfloor}$ by the proof of Theorem \ref{paving-rz}. Thus, the coefficient of $t^{\lfloor(rk(M)-1)/2\rfloor-1}$ is positive. On the other hand, the constant term of $Q_M(t)$ is just the absolute value of the constant term of the characteristic polynomial of $M$, which is positive, see \cite[Proposition 3.1]{vecchi2021matroid} and \cite[Theorem 7.1.8]{zaslavsky1987mobius}.
It is well known that if the coefficients of a univariate polynomial are non-negative, log-concave, and have no internal zeros, then they are also unimodal. 
Thus, for any paving matroid $M$, each coefficient of the Kazhdan-Lusztig polynomial $Q_M(t)$ corresponding to $t^i$ with $i < \left\lfloor \frac{\mathrm{rk}(M) - 1}{2} \right\rfloor$ is positive.
\end{proof}
Based on Conjecture~\ref{same_deg}, we believe that an analog of Proposition~\ref{paving_positive} holds for the Kazhdan–Lusztig polynomials $P_M(t)$ as well. That is, for any paving matroid $M$, each coefficient of $P_M(t)$ is positive, except possibly the coefficient of $t^{\lfloor \frac{\mathrm{rk}(M)-1}{2} \rfloor}$.
 \section*{Acknowledgements}
 This work was supported by the National Natural Science Foundation of China.

\end{document}